\renewcommand{\ref}{\hyperref}
\newcommand{\Ocal}{\mathcal{O}}
\newcommand{\Hcal}{\mathcal{H}}
\newcommand{\Ccal}{\mathcal{C}}
\newcommand{\Ucal}{\mathcal{U}}
\newcommand{\Acal}{\mathcal{A}}
\newcommand{\Tcal}{\mathcal{T}}
\newcommand{\Z}{\mathbb{Z}}
\renewcommand{\P}{\mathbb{P}}
\newcommand{\C}{\mathbb{C}}
\newcommand{\newf}{{\tilde{f}}}
\DeclareMathOperator{\euler}{\chi_\mathrm{top}}
\newcommand{\csm}{c_{\mathrm{SM}}}
\newcommand{\abs}[1]{\left\lvert#1\right\rvert}
\newcommand{\gera}[1]{\langle #1 \rangle}
\DeclareMathOperator{\sing}{Sing}
\DeclareMathOperator{\GL}{GL}
\DeclareMathOperator{\grad}{grad}
\newcommand{\one}{{\mathbf{1}}}
\newcommand{\toricscheme}{\Tcal}
\newtheorem{theorem}{Theorem}[section]
\newtheorem{lemma}[theorem]{Lemma}
\newtheorem{corollary}[theorem]{Corollary}
\newtheorem{proposition}[theorem]{Proposition}
\theoremstyle{definition}
\newtheorem{remark}[theorem]{Remark}
\newtheorem{example}[theorem]{Example}
\theoremstyle{plain}
\newcommand{\terminou}{\hfill$\lrcorner$}
\begin{document}
\author[{\scriptsize\rm{}T.~Fassarella, N.~Medeiros and R.~Salom\~ao}]
{
T. Fassarella, N. Medeiros \and R. Salom\~ao
}

\title[{\scriptsize\rm{}Toric polar maps and characteristic classes}]
{Toric polar maps and characteristic classes}

\begin{abstract} 
Given a hypersurface in a complex projective space, we prove that the multidegrees of its toric polar map agree, up to sign, with the coefficients of the Chern-Schwartz-MacPherson class of a distinguished open set, namely the complement of the union of the hypersurface and the coordinate hyperplanes. In particular, the degree of the toric polar map is given by the signed topological Euler characteristic of the distinguished open set.

For plane curves, a precise formula  for the  degree of the toric polar map is obtained in terms of local invariants. Finally, we construct families, in arbitrary dimension, of irreducible hypersurfaces whose toric polar map
is birational.
\end{abstract}
\maketitle



\section{Introduction}

The complex projective space of dimension $n$ will be denoted by $\mathbb P^n$.
Given a homogeneous polynomial 
$f \in \C[x_0,\dotsc,x_n]$, we study a certain rational map associated to it, referred here as 
the \emph{toric polar map} of $f$, defined by
\begin{align*}
	T_f\colon \P^{n} &\dashrightarrow \P^{n}\\
	x &\mapsto (x_{0}f_{x_0}(x):\cdots :x_{n}f_{x_n}(x))
\end{align*}
where $x=(x_0:\cdots:x_n)$
and $f_{x_i}$ stands for the partial derivative.
Let $D=V(f)\subset \P^n$ be the hypersurface given by the zero locus scheme of $f$ 
and let $\Hcal=V(x_0\dotsb x_n)\subset\P^n$ be the union of the $n+1$ coordinate hyperplanes.

Many properties of a rational map can be read from its base locus. For the toric polar map, this calls for an analysis of
the singular locus scheme $\sing(D)$ and the position of $D$ relatively to $\Hcal$.
Accordingly, the main theme of this paper is to establish 
a relationship between the multidegrees of $T_f$ and the Chern-Schwartz-MacPherson class of a distinguished open set, namely the complement of the union of
the hypersurface and the coordinate hyperplanes,
\(
\mathbb \P^{n}\setminus(D\cup\Hcal),
\) which we call the \emph{standard complement of $D$}.

\smallskip

The classical \emph{gradient map} $\grad f\colon \P^n\dashrightarrow\P^n$, whose coordinates are given by
the partial derivatives, is a subject of active research in projective geometry.  As we will point out along the paper, gradient and toric polar maps are interrelated and share many analogies, although they behave somewhat differently (e.g. the latter is sensitive to a linear change of coordinates, while the former is not). 
A central problem, pertinent to both maps, is to find a geometric description of hypersurfaces yielding
\emph{Cremona transformations}. 
For the gradient map, such hypersurfaces are known as \emph{homaloidal}. I. Dolgachev \cite{Dol00} classified the homaloidal plane curves; in higher dimensions there are many beautiful constructions, see e.g. \cite{CRS08} and \cite[\S4]{Huh13}, but the whole picture is far from complete. 
On the other hand, plane curves whose toric polar map is birational were classified in \cite{BRS10}. The corresponding problem in higher dimensions is quite relevant in many applications, although not much is known.

Restricting the toric polar map to the hypersurface $D^*=D\setminus \Hcal$, contained in the algebraic torus  $(\C^*)^n$, one gets the so called \emph{logarithmic Gauss map} $\gamma\colon D^* \dashrightarrow \P^{n-1}$.
By a result of M. Kapranov \cite[Theorem~1.3]{Kap91}, 
for irreducible hypersurfaces the following 
three conditions are equivalent: the
{logarithmic Gauss map} is birational; $D^*$ admits a \emph{Horn parametrization}; $D^*$ is birational to a reduced $A$-discriminantal variety (see also
\cite[Remark~4]{Huh14} for more information).
Kapranov's result has been reinterpreted and extended by J. Huh \cite[Theorem~1]{Huh14}, who proves that a subvariety (of any dimension) of
$(\C^*)^n$ admits a Horn parametrization if and only if it has \emph{maximum likelihood degree} one, which is an important notion in algebraic statistics.
Relatedly, an interesting problem in geometric modeling is to find a  \emph{toric patch} 
that admits a rational reparametrization having linear precision and that happens precisely when  the associated toric polar map is birational
\cite[Corollary~4.13]{GaS10}. 
Moreover,  up to change of coordinates, the toric polar map can always be factored as a (scaled) monomial map and a linear projection (see e.g. \cite[\S 4]{GaS10}), hence there is an 
intrinsic relationship with toric varieties. 
For a direct application of the techniques of the present paper to a classical problem on Cremona monomial maps, see \cite{FM22}.
 
There are other related works, of which we only mention a few: Affine nondominant toric polar maps were characterized by the first author in 
\cite{F11}; for an amusing connection with real algebraic curves and \emph{amoebas}, see
\cite[\S3]{Mikhalkin}; for the relationship with \emph{hyperplane arrangements} and their complements \cite{OT95}, see our brief discussion in Remark~\ref{rmk-arrangements}.

\smallskip
Now we turn to discuss our results.
For starters, we compute the (\emph{topological}) \emph{degree} of a toric polar map in terms of the standard complement of the hypersurface. Given a constructible subset $C\subset\P^n$, let $\euler(C)$ denote its topological Euler characteristic.
\begin{theorem}
\label{thm-intro-01}
The degree of $T_{f}$ is  the signed Euler characteristic
\begin{equation*}
\label{eq-intro-01}
\deg T_{f}=(-1)^{n}\euler(\P^n\setminus (D\cup\Hcal)).
\end{equation*}
\end{theorem}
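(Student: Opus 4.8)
The plan is to interpret the topological degree of $T_f$ as the number of critical points of a generic logarithmic ``master function'' on the standard complement $X=\P^n\setminus(D\cup\Hcal)$, and then to invoke the principle---Varchenko's conjecture, proved by Orlik--Terao for arrangements and by Huh \cite{Huh13} in general---that for a smooth very affine variety of dimension $n$ this number equals the signed Euler characteristic $(-1)^n\euler(X)$.

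First I would reduce the degree to a fiber count. For generic $[a]=(a_0:\dotsb:a_n)$ one has $\deg T_f=\#\,T_f^{-1}([a])$, and a preimage $x$ satisfies $x_if_{x_i}(x)=\lambda a_i$ for a common scalar $\lambda$ and all $i$. Summing and using the Euler relation $\sum_i x_if_{x_i}=d\,f$ (with $d=\deg f$) gives $\lambda\sum_i a_i=d\,f(x)$. Choosing $[a]$ with all $a_i\neq 0$ and $\sum_i a_i\neq 0$, no preimage can lie on $D$ or on $\Hcal$: a point of $D$ would force $\sum_i a_i=0$, while $x_j=0$ would force $a_j=0$. Hence every preimage lies in $X$, where the equations become
\[
\frac{x_i f_{x_i}}{f}=b_i,\qquad b_i:=\frac{d\,a_i}{\sum_j a_j},\qquad \sum_i b_i=d.
\]

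Next I would recognize these as the critical-point equations of a master function. On $X$ the closed logarithmic form $\omega=d\log f-\sum_i b_i\,d\log x_i$ is single valued, and its zero locus on $X$ is exactly $T_f^{-1}([a])$. Contracting with the Euler vector field shows $\iota_E\omega=d-\sum_i b_i$, so the relation $\sum_i b_i=d$ is precisely the condition for $\omega$ to descend to a rational logarithmic form on $\P^n$ with poles along $D\cup\Hcal$. The residue along $D$ is normalized to $1$ and the remaining $-b_i$ satisfy the single linear relation $\sum_i b_i=d$, leaving $n$ free parameters; as $[a]$ varies in a dense open subset of $\P^n$ the weights $(b_i)$ sweep out a dense open subset of the hyperplane $\{\sum_i b_i=d\}$, i.e.\ they are generic in the sense required below. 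Finally, $X$ is open in the smooth variety $\P^n$, hence smooth, and the functions $x_1/x_0,\dotsc,x_n/x_0$ together with $f/x_0^{\,d}$ are invertible on $X$ and realize it as a very affine variety, so the critical-points theorem applies: for generic weights $\omega$ has exactly $(-1)^n\euler(X)$ nondegenerate zeros on $X$. The fiber over generic $[a]$ is therefore reduced, its cardinality equals $\deg T_f$, and we conclude $\deg T_f=(-1)^n\euler(X)$; the non-dominant case is consistent, since then there are no critical points and $\euler(X)=0$.

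The hard part will be the middle step: checking that the preimage-to-critical-point dictionary is a \emph{genericity-preserving} bijection. The delicate feature is that the residue along $D$ is pinned to $1$ and the weights are confined to the affine hyperplane $\sum_i b_i=d$; one must verify that this confinement is exactly the locus on which $\omega$ is a well-defined logarithmic form on $\P^n$, so that genericity \emph{within} that hyperplane suffices to trigger the theorem. Reassuringly, possible singularities of $D$ or bad intersections $D\cap\Hcal$ pose no obstruction, since the theorem requires only smoothness of the \emph{open} variety $X$, which always holds. Within the present paper this statement is moreover the top graded piece of the multidegree--CSM correspondence announced in the abstract: the topological degree is the last multidegree of $T_f$, matching the dimension-zero term $\int_{\P^n}\csm(\one_X)=\euler(X)$ up to the sign $(-1)^n$.
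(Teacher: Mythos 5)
Your overall strategy is the same as the paper's: identify the fiber $T_f^{-1}([a])$ with the zero locus on $X=\P^n\setminus(D\cup\Hcal)$ of a logarithmic $1$-form (your dictionary is, almost verbatim, the paper's Lemma~\ref{aberto1}), and then invoke a Varchenko--Orlik--Terao--Huh type theorem counting the zeros of a \emph{generic} such form by $(-1)^n\euler(X)$. The genuine gap sits exactly at the step you yourself flag as ``the hard part'', and the justification you sketch for it does not hold. Write $f=f_1^{n_1}\dotsb f_l^{n_l}$. The forms $\omega$ arising from your fiber count have their residues along the components $V(f_j)$ \emph{pinned} to $n_j$, with only the residues $-b_i$ along the coordinate hyperplanes free. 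The descent (degree-zero) condition for the full family of master functions $x_0^{\lambda_0}\dotsb x_n^{\lambda_n}f_1^{u_1}\dotsb f_l^{u_l}$ is the single relation $\sum_i\lambda_i+\sum_j u_j\deg f_j=0$, whereas your family imposes in addition the $l$ conditions $u_j=n_j$. So your claim that the confinement is ``exactly the locus on which $\omega$ is well defined'' is false as soon as $f$ is reducible or non-reduced: your weights then sweep a subspace of codimension $l-1$ (even projectively, i.e.\ even after exploiting that zeros of $\omega$ are scale-invariant) inside the parameter space in which \cite[Theorem 1]{Huh13}, \cite{Sil96} and \cite{OT95} assert genericity. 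A dense Zariski open subset of that space can be disjoint from such a subspace, so those theorems, as stated, do not ``trigger'' on your hyperplane $\{\sum_i b_i=d\}$. (For $f$ irreducible, possibly non-reduced, your argument can be repaired by the scale-invariance remark alone; the general case cannot, and the non-dominant case suffers from the same defect.)

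This is precisely why the paper does not simply cite the critical-point theorem but re-runs the Silvotti--Huh argument on the restricted parameter space $\Lambda=\{\sum_i\lambda_i+z\deg f=0\}$, with $u=z\cdot(n_1,\dotsc,n_l)$. It takes an embedded resolution $\pi$ making $\big(\pi^*(D+\Hcal)\big)_{\mathrm{red}}$ simple normal crossing, proves in Proposition~\ref{residues} that the residue along each component $E$ is the linear functional $\rho_E(\lambda,z)=\sum_j m_jn_jz+\sum_i k_i\lambda_i$ and, crucially, that $\rho_E$ is not proportional to the equation defining $\Lambda$ (this uses the positivity statements, e.g.\ $k_i>0$ exactly when $\pi(E)\subset V(x_i)$, and the fact that the coordinate hyperplanes have empty common intersection). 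Hence, in Lemma~\ref{aberto2}, off finitely many hyperplanes $\Lambda_E\subset\Lambda$ the pulled-back form vanishes nowhere on the boundary divisor, its zeros are therefore isolated, and they are counted by $c_n$ of the logarithmic cotangent bundle, which equals $(-1)^n\euler(X)$ by logarithmic Poincar\'e--Hopf. Some argument of this kind --- resolution plus residue control, or any other proof that the good locus of weights meets your pinned subspace in a dense open set --- is indispensable to complete your proof.
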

In the main text, this is Theorem~\ref{thm-top-degree}. 

As a consequence, the degree of the toric polar map does depend only on the  reduced part of a given polynomial, which is rather surprising from the algebraic point of view.
A similar description for the degree of the gradient  map was first obtained in \cite[Theorem 1]{DP03} via  topological methods, namely $\deg \grad f =(-1)^n\euler (\P^n\setminus (D\cup H))$, where $H$ is a general hyperplane in $\P^n$. 

Our approach, inspired on the works of R. Silvotti \cite{Sil96} and J. Huh \cite{Huh13}, is to 
consider a family of  functions
$\varphi_{\mathbf{u}}=x_0^{u_0}\dotsb x_n^{u_n}f_1^{u_{n+1}}\dotsb f_l^{u_{n+l}}$,
where $f_1,\dotsc,f_l$ are the irreducible factors of $f$ and
$\mathbf{u}\in\C^{n+l+1}$.
If the exponents $\mathbf{u}$ are sufficiently generic, then  $\deg T_f$
coincides with the number of critical points of $\varphi_{\mathbf{u}}$. 
On the other hand, we prove that this number  
agrees with the signed Euler characteristic of the standard complement, yielding the theorem. 

\smallskip
Our next subject is the relationship with characteristic classes.

Among the many different notions of Chern classes for a singular variety $X$, the 
 \emph{Chern-Schwartz-MacPherson} (CSM) class $\csm(\one_X)$, in the Chow group $A_*X$,
  will be particularly important to us.
  This class, reviewed in \S\ref{subsection-basic-definitions}, 
 agrees with the 
Chern class of the tangent bundle when $X$ is smooth and  its degree $\int \csm(\one_X)$ is exactly 
 $\euler(X)$. 
With that in mind, Theorem~\ref{thm-intro-01} simply says that,  up to sign, 
 the degree of the class $\csm(\one_{\P^n\setminus (D\cup\Hcal)})$ is the  degree of
$T_f$. It is natural to wonder about the whole class. We ask: 
Is it possible to recover the CSM class of the standard complement 
from the
 toric polar map?

Towards the answer, to each rational map $\varphi\colon \P^n\dashrightarrow\P^n$ we associate a series of integers
$d_0,\dotsc,d_n$,
called its \emph{multidegrees} or \emph{projective degrees}, defined as the coefficients of the class of its graph in the Chow group $A_*(\P^n\times\P^n)$.
Geometrically, $d_j$ is the degree of the preimage of a general $\P^{n-j}$ 
on the target and thus $d_n$ is just the degree of the map $\varphi$; see Section~\ref{section-Euler-complement} for details. As a last piece of notation, given
any subscheme $Y\subseteq\P^n$, let $s(Y,\P^n)\in A_*Y$ denote its Segre class. 
The main result of the present paper is:
\smallskip
\begin{theorem}
\label{conj-A}
Let $f\in \C[x_0,\dotsc,x_n]$ be a homogeneous polynomial and
let $D=V(f)\subset\P^n$ be the associated hypersurface. 
Let $d_0,\dotsc,d_n$ be the multidegrees of the toric polar map $T_f$.  Then
\begin{equation*}
\label{eq-conj-1}
\csm(\one_{\P^n\setminus (D\cup\Hcal)}) = \sum_{i=0}^{n} (-1)^{i} d_{i}\,[\P^{n-i}] 
\qquad\qquad \in A_*\P^n.
\end{equation*}
Equivalently, if $\toricscheme(D)$ denotes the scheme defined by the ideal
$\gera{x_0f_{x_0}, \dotsc, x_nf_{x_n}}$, then
\begin{equation*}
\label{eq-conj-2}
\csm(\one_{\P^n\setminus (D\cup\Hcal)}) = c(\Ocal(D))^{-1}\cap(1-s(\toricscheme(D),\P^n)^\vee\otimes
\Ocal(D)) 
\qquad \in A_*\P^n.
\end{equation*}
\end{theorem}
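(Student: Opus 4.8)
The plan is to establish the first displayed identity, the one written in terms of the multidegrees $d_i$, and to read off the Segre-class form from it. Throughout write $U=\P^n\setminus(D\cup\Hcal)$ and $h=c_1(\Ocal_{\P^n}(1))$. The two formulas are equivalent by the standard dictionary between the projective degrees of a rational map and the Segre class of its base scheme. The base scheme of $T_f$ is precisely $\toricscheme(D)=V(x_0f_{x_0},\dots,x_nf_{x_n})$; blowing it up resolves $T_f$, and reading the $d_i$ off the exceptional divisor gives
\[
\sum_{i=0}^{n}(-1)^i d_i[\P^{n-i}]=c(\Ocal(D))^{-1}\cap\bigl(1-s(\toricscheme(D),\P^n)^\vee\otimes\Ocal(D)\bigr)
\]
once the operations $(-)^\vee$ and $\otimes\Ocal(D)$ are unwound, this being Aluffi's reformulation of multidegrees via Segre classes. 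I expect this step to be essentially formal bookkeeping, so the heart of the matter is the first identity.

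For the first identity I would use the characterization of the CSM class by its general linear sections. By the hyperplane-section formula for CSM classes, a general hyperplane $H$ satisfies $\csm(\one_{U\cap H})=\frac{h}{1+h}\cap\csm(\one_U)$ in $A_*\P^n$, whence
\[
\euler(U\cap\Lambda)=\int\Bigl(\frac{h}{1+h}\Bigr)^{p}\cap\csm(\one_U)
\]
for a general linear subspace $\Lambda\subset\P^n$ of codimension $p$. This linear correspondence between the coefficients of $\csm(\one_U)$ and the sectional Euler characteristics $\{\euler(U\cap\Lambda)\}$ is triangular and invertible, so proving the first identity amounts to checking, for every $p$ and general $\Lambda$, that $\euler(U\cap\Lambda)$ equals the corresponding combination $\int(\frac{h}{1+h})^{p}\cap\sum_i(-1)^i d_i[\P^{n-i}]$ of the multidegrees. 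The top case $p=0$ is exactly the degree formula $\deg T_f=(-1)^n\euler(U)$ already proved, and it serves both to anchor the computation and to normalize all signs.

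The core is therefore a sectional strengthening of that degree formula. Restricting the master function $\varphi_{\mathbf u}=x_0^{u_0}\dotsb x_n^{u_n}f_1^{u_{n+1}}\dotsb f_l^{u_{n+l}}$ to a general $\Lambda$ yields again a generic master function for the induced configuration $(D\cup\Hcal)\cap\Lambda$ on $\Lambda$, so by the Silvotti--Huh critical-point count, the engine behind the degree formula, its number of critical points is $(-1)^{\dim\Lambda}\euler(U\cap\Lambda)$. On the other hand, the toric polar map is the projectivized logarithmic gradient of $f$: the identity $df=\sum_i(x_if_{x_i})\frac{dx_i}{x_i}$ exhibits the tuple $(x_if_{x_i})_i$ as the coordinates of the section $d\log f\in H^0\bigl(\P^n,\Omega^1_{\P^n}(\log\Hcal)\otimes\Ocal(D)\bigr)$ whose degeneracy scheme is $\toricscheme(D)$. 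Consequently the critical points of $\varphi_{\mathbf u}$ on a section are intersection points of the graph of $T_f$ with pullbacks of linear spaces, and their counts are governed by the multidegrees $d_i$. Matching the two evaluations of $\euler(U\cap\Lambda)$ across all $\Lambda$ promotes the degree identity to the claimed identity of classes.

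The main obstacle is precisely this promotion from a single number to the full family of sectional invariants in a way that tracks the underlying cycle structure: one must identify the characteristic (conormal) cycle of $\one_U$ with the Lagrangian cycle governing the multidegrees of $T_f$, with correct multiplicities along the base scheme $\toricscheme(D)$ and correct accounting of its contribution, especially where $D\cup\Hcal$ fails to be normal crossing and the logarithmic structure degenerates. Subsidiary difficulties are the simultaneous control of genericity, so that $\mathbf u$ and $\Lambda$ make every sectional critical-point count finite and transverse, and the careful reconciliation of the topological signs $(-1)^i$ with the sign and twist conventions in $(-)^\vee$ and $\otimes\Ocal(D)$; the already-established degree formula fixes these normalizations.
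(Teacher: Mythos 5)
Your reduction to sectional Euler characteristics is sound: the correspondence between the coefficients of $\csm(\one_U)$ and the numbers $\euler(U\cap\Lambda)$ for general linear $\Lambda$ is indeed triangular and invertible, and the top case is Theorem~\ref{thm-top-degree}. The gap is in the core step, where you claim that ``the critical points of $\varphi_{\mathbf{u}}$ on a section are intersection points of the graph of $T_f$ with pullbacks of linear spaces, and their counts are governed by the multidegrees $d_i$.'' This is unproven and, as stated, incorrect. If $\Lambda=V(\ell_1,\dotsc,\ell_p)$, a point $x\in U\cap\Lambda$ is critical for $\varphi_{\lambda,z}|_\Lambda$ precisely when $T_f(x)$ lies in the linear span $\langle\lambda,\,T_{\ell_1}(x),\dotsc,T_{\ell_p}(x)\rangle$ --- a Lagrange-multiplier condition in which the target linear space \emph{moves with} $x$. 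This is a determinantal incidence condition on the graph $\Gamma_{T_f}$, not an intersection with a product of fixed linear subspaces: the latter would give $d_{n-p}$, whereas the number you must produce is $\sum_{j}\binom{p+j-1}{j}\,d_{n-p-j}$. Already for a general line $\Lambda$ in $\P^n$ and $f$ reduced of degree $k$ coprime to $x_0\dotsb x_n$ these differ: $\euler(U\cap\Lambda)=1-k-n$, so the required count is $d_1+n-1$, not $d_1=k$. Converting the moving-span condition into that binomial combination, with control of the contributions along the base scheme $\toricscheme(D)$ and of the loci where the span degenerates (note that Kleiman transversality does not apply, since the incidence varieties are not translates under a transitive group action), is exactly the identification of the characteristic cycle of $\one_U$ with the critical-point correspondence --- the step you yourself flag as ``the main obstacle'' and then do not address. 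That identification is essentially the content of the theorem; it cannot be obtained as a formal ``promotion'' anchored by the $p=0$ case.

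For contrast, the paper sidesteps this obstacle by a different device: it passes to the smooth $k$-fold covering $Z=V(f-1)\subset(\C^*)^{n+1}$ of $U$, on which the relevant master functions become \emph{monomial}, so the closure of their critical loci is the variety of critical points $\mathfrak{X}(Z)$; it identifies $\mathfrak{X}(Z)$ with the graph of the logarithmic Gauss map (Proposition~\ref{prop-critical-set}), shows its bidegrees equal $k\,d_i$ via the linear projection (Proposition~\ref{prop-critical-degproj}), invokes \cite[Theorem~1.3]{MRWW22} to equate these bidegrees with the CSM class of the smooth variety $Z$, and finally descends to $U$ by functoriality of $\csm$ under the proper pushforward of the degree-$k$ covering. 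In your framework, an input of the strength of \cite{MRWW22} (or an equivalent characteristic-cycle argument) is precisely what is needed to justify the missing step. A secondary point left open in your sketch: the Silvotti--Huh count on each section must be run with the special exponents $u=z\,(n_1,\dotsc,n_l)$ in order to connect with $T_f$, so the residue and resolution arguments of Lemmas~\ref{aberto1} and \ref{aberto2} would have to be reproved on every $\Lambda$, not merely quoted.
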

In the main text, this is Theorem~\ref{thm-A}.
\smallskip

The second equation is merely a rephrasing of the first in more intrinsic terms, via P. Aluffi's operators `$\otimes$' and `$ ^\vee$' (see \cite[\S2]{Alu94} or \cite{AluCSMintro} for definitions).
Now, let $H\subset\P^n$ denote a general hyperplane.
There is an analogous formula, due to P. Aluffi \cite[Theorem~2.1]{Alu03}, for the CSM class of $\P^n\setminus (D\cup H)$ in terms
of the multidegrees of the gradient map, 
namely (see also Lemma~\ref{lemma-union-general-section} below),
\begin{equation*}
\label{eq-csm-comp-DunionH}
\csm(\one_{\P^n\setminus (D\cup H)}) =  
(1+H)^{n}
\big(
c(\Ocal(D))^{-1} 
\cap
(  
1 - s(\sing(D),\P^n)^\vee\otimes \Ocal(D)  
)
\big) 
\quad \in A_*\P^n.
\end{equation*}
The similarity between the last two formulas is very appealing.

\medskip

Theorem~\ref{conj-A} was conjectured to be true in a previous version of this work. 
The key ingredients in our proof are the construction of an auxiliary hypersurface,
which yields a covering of our standard complement, together with \cite[Theorem~1.3]{MRWW22},
which is a consequence of the main result of that paper.
Finally, we note that Theorem~\ref{thm-intro-01} follows directly from Theorem~\ref{conj-A}. We have decided to keep the alternative proof of the former, in Section~\ref{section-Euler-complement}, because it does not depend on the machinery that involves the CSM class.

\medskip

Now, we describe the contents of the paper.
\smallskip

We start Section~\ref{section-Euler-complement} by  reviewing basic definitions and then proceed right away to  prove Theorem~\ref{thm-intro-01}.
In  Remark~\ref{rmk-arrangements} we describe the connection with hyperplane arrangements.

In Section~\ref{section-csm},
we prove our main result, Theorem~\ref{conj-A}.
This section is divided in three parts, detailed as follows.
In \S\ref{subsection-basic-definitions} we briefly review the Chern-Schwartz-MacPherson class.
In \S\ref{subsection-loggauss-critical} we construct a non-ramified covering $Z$ of our standard complement $\mathbb \P^{n}\setminus(D\cup\Hcal)$.  Proposition~\ref{prop-critical-degproj} provides a relation between the bidegrees of the variety of critical points associated to $Z$ and multidegrees of our toric polar map. 
Using this relation, in \S\ref{subsection-mainresult} we prove Theorem~\ref{conj-A}.

Section~\ref{section-applications} is devoted to applications. 
 In Proposition~\ref{prop-toric-grad} we show that, in general position, the multidegrees of the toric polar and gradient maps are determined by each other. Corollary~\ref{cor-isolated-sings-genl-pos} provides a simple formula for the case of isolated singularities.
 In Proposition~\ref{prop-noplano} we give  a formula for the degree of the toric polar map of a plane curve in terms of its Milnor numbers and its incidence with the coordinate lines.
In \S\ref{subsection-birational} 
we present two methods to construct, in arbitrary dimension, hypersurfaces yielding a birational toric polar map.

\smallskip

\textbf{Acknowledgments}.
We are extremely grateful to the anonymous referee for showing us the key reference \cite{MRWW22}.
Furthermore, the proof of our main result presented in 
Section~\ref{section-csm}
is essentially due to him/her.
It is also a pleasure to thank Giuseppe Borelli, Alicia Dickenstein, Eduardo Esteves, June Huh,
Jorge V. Pereira and Israel Vainsencher for valuable conversations and for pointing out useful references.  
Computational experiments done in
\textsc{Macaulay2}~\cite{M2} were indispensable.


\section{The Euler characteristic of the standard complement}
\label{section-Euler-complement}
We start with some basic definitions.
Let $\varphi=(q_0:\cdots:q_n)\colon \P^n\dashrightarrow \P^n$ be
a rational map given by polynomials
of same degree.
Its \emph{base locus} is the scheme defined by the ideal
$\gera{\bar{q}_0,\dotsc,\bar{q}_n}$ where $\bar{q}_i=q_i/\gcd(q_0,\dotsc,q_n)$.
 The \emph{image} of $\varphi$ is the Zariski closure of $\varphi(V)$, where $V$ is the maximal domain where the map is defined.  When $\varphi$ is dominant,
the corresponding extension of
function fields is finite; in that case, the (\emph{topological}) \emph{degree} of $\varphi$ is defined as 
\[
\deg \varphi = [S:\varphi^{*}(S)] 
\]
where $S=\C(x_0,\dotsc,x_n)$. When $\varphi$ is nondominant we set $\deg \varphi = 0$.
{The degree of $\varphi$} coincides with the number of points in a general fiber of $\varphi$. 
Let $\Gamma_{\varphi}\subset \P^n\times \P^n$ denote the graph of $\varphi$. Let $h$ (resp. $k$) be the pullback of the hyperplane class from the first (resp. second) factor. The class of $\Gamma_{\varphi}$ in the Chow ring of $ \P^n\times \P^n$ determines integers $d_0, \dotsc, d_n$ such that
\begin{equation*}
\label{eq-graus-proj}
[\Gamma_{\varphi}] = d_0 k^n+d_1k^{n-1}h+\cdots +d_nh^n 
\qquad \in A_n(\P^n\times \P^n).
\end{equation*}
These integers are called the {\it multidegrees} of the map ${\varphi}$.
If $\mathbb P^{n-j}\subset \P^n$ is a general linear  subspace of codimension $0\leq j \leq n$, then
 $d_j$ is the degree of the closed subset $\overline{\varphi^{-1}(\P^{n-j})}$. 
 In particular, $d_0=1$, 
  $d_1=\deg(\bar{q}_j)$ for any $j$
 and $d_n = \deg \varphi$.
  
We do not make any notational distinction between a divisor on $\P^{n}$ and the respective subscheme of $\P^{n}$. For a divisor $D = \sum a_iD_i$ on $\P^n$, with $D_i$ irreducible, we denote by $D_{\mathrm{red}} = \sum D_i$ the associated reduced divisor. 
\medskip

Let $f\in \C[x_0,\dotsc,x_n]$ be a homogeneous polynomial of degree $k\ge 1$
and let $T_f\colon \P^n\dashrightarrow \P^n$, given by
$x \mapsto (x_{0}f_{x_{0}}(x):\cdots :x_{n}f_{x_{n}}(x))$, 
be the associated toric polar map. 

We point out that one may always assume $\gcd(f,x_0\cdots x_n)=1$. Indeed, if $g=x_if$, then an argument with Euler's formula shows that the maps $T_f$ and $T_g$ differ by a linear change of coordinates, so they are defined by the same linear system.

Throughout the paper the multidegrees of the toric polar map will play a major role. Our first result
gives a characterization of the degree.

\begin{theorem}
\label{thm-top-degree}
The degree of $T_{f}$ is equal to the signed Euler characteristic of its standard complement:
\begin{equation*}
\deg T_{f}=(-1)^{n}\euler(\P^n\setminus (D\cup\Hcal)).
\end{equation*}
\end{theorem}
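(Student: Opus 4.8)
The plan is to follow the Silvotti--Huh strategy sketched in the introduction: realize $\deg T_f$ as the number of critical points of a generic master function, and identify that number with the signed Euler characteristic of $U:=\P^n\setminus(D\cup\Hcal)$. First I would invoke the reduction to $\gcd(f,x_0\cdots x_n)=1$ explained above and write $f=f_1\cdots f_l$ with the $f_j$ the distinct irreducible factors. Then, for $\mathbf u\in\C^{n+l+1}$, I would introduce the multivalued function $\varphi_{\mathbf u}=x_0^{u_0}\cdots x_n^{u_n}f_1^{u_{n+1}}\cdots f_l^{u_{n+l}}$ together with its closed logarithmic differential
\[
\omega_{\mathbf u}=d\log\varphi_{\mathbf u}=\sum_{i=0}^{n}u_i\,\frac{dx_i}{x_i}+\sum_{j=1}^{l}u_{n+j}\,\frac{df_j}{f_j}.
\]
Imposing the single linear balancing relation $\sum_i u_i+\sum_j(\deg f_j)\,u_{n+j}=0$ -- equivalently $\iota_E\omega_{\mathbf u}=0$ for the Euler field $E=\sum_i x_i\partial_{x_i}$, via Euler's identity $\sum_i x_if_{x_i}=kf$ -- guarantees that $\omega_{\mathbf u}$ descends to a global section of $\Omega^1_{\P^n}(\log(D\cup\Hcal))$, a logarithmic $1$-form on $\P^n$ with poles along $D\cup\Hcal$.

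The crucial computation is to identify the zeros of $\omega_{\mathbf u}$ on $U$ with a fibre of $T_f$. Since $\deg T_f$ depends only on $D_{\mathrm{red}}$, I may take $f$ reduced and specialize the exponents to $u_{n+1}=\cdots=u_{n+l}=c$, so that $\sum_j u_{n+j}\,df_j/f_j=c\,df/f$ and
\[
\omega_{\mathbf u}=\sum_{i=0}^{n}\Big(u_i+c\,\frac{x_if_{x_i}}{f}\Big)\frac{dx_i}{x_i}.
\]
In the coframe $\{dx_i/x_i\}$ on the torus, a point $x\in U$ is a zero of $\omega_{\mathbf u}$ exactly when every coefficient vanishes, i.e. $x_if_{x_i}=-(u_i/c)\,f$ for all $i$; by balancing only $n$ of these $n+1$ conditions are independent on $\P^n$. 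Projectively this reads $T_f(x)=(u_0:\cdots:u_n)$, so the zero locus of $\omega_{\mathbf u}$ in $U$ is precisely $T_f^{-1}(u_0:\cdots:u_n)$. For a generic target point this fibre is reduced with $\deg T_f$ points, whence the number of critical points of $\varphi_{\mathbf u}$ equals $\deg T_f$.

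The second half identifies this number with $(-1)^n\euler(U)$. Here I would invoke the theorem of Silvotti \cite{Sil96} and Huh \cite{Huh13}: for a smooth very affine variety of dimension $n$, a generic master function built from its boundary equations has exactly $(-1)^n$ times the Euler characteristic as its number of (nondegenerate) critical points. The set $U$ is open in $\P^n$, hence smooth of dimension $n$, and it is very affine, being embedded as a closed subvariety of a torus through the functions $x_i/x_0$ together with the inverse of the dehomogenization of $f$. The admissible master functions for $U$ are precisely the $\varphi_{\mathbf u}$ above, so the generic critical count is $(-1)^n\euler(U)$, and combining this with the previous paragraph yields $\deg T_f=(-1)^n\euler(U)$.

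The main obstacle is the genericity and transversality bookkeeping that makes the two counts coincide. I must verify that, for generic $\mathbf u$, the form $\omega_{\mathbf u}$ has only finitely many zeros, all nondegenerate and all contained in $U$, with none escaping to the boundary $D\cup\Hcal$ where $\omega_{\mathbf u}$ has poles; this properness is exactly where the hypothesis $\gcd(f,x_0\cdots x_n)=1$ and the balancing relation are used, to control $\omega_{\mathbf u}$ along each component of $D\cup\Hcal$. I must also check that the specialized exponent vector coming from $T_f$ lands in the dense open locus where the generic count $(-1)^n\euler(U)$ is attained -- equivalently, that the family of exponent vectors obtained by varying the target $(u_0:\cdots:u_n)\in\P^n$ is not swallowed by the proper resonance locus. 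Establishing these two statements, and in particular the properness preventing critical points from running off to $D\cup\Hcal$, is the technical heart of the proof.
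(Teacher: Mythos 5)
You follow the same Silvotti--Huh strategy as the paper, and your identification of the critical points of $\varphi_{\mathbf u}$ in $\Ucal=\P^n\setminus(D\cup\Hcal)$ with a fibre of $T_f$ is exactly the paper's Lemma~\ref{aberto1}. However, there are two genuine gaps. The first is a circularity: your reduction ``Since $\deg T_f$ depends only on $D_{\mathrm{red}}$, I may take $f$ reduced'' invokes precisely Corollary~\ref{cor-reduced}, which the paper \emph{deduces from} Theorem~\ref{thm-top-degree}; no independent proof of that invariance is available (it is the toric analogue of Dolgachev's conjecture for gradient maps, and the paper emphasizes that it is surprising from the algebraic point of view). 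As written, your argument proves the theorem only for reduced $f$ and cannot reach the general case. The paper's fix is to never reduce: writing $f=f_1^{n_1}\dotsb f_l^{n_l}$, it specializes the exponents to $u=z\cdot(n_1,\dotsc,n_l)$, proportional to the multiplicities, so that $\sum_j u_j\,df_j/f_j=z\,df/f$ and the critical equations $x_if_{x_i}(x)+\lambda_if(x)=0$ involve the toric polar map of the given, possibly non-reduced, polynomial $f$ itself.

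The second gap is that the step you yourself call ``the technical heart'' --- that the constrained exponent vectors are not swallowed by the resonance locus --- is left unproven, and it does not follow from quoting Silvotti or Huh: their genericity statement produces a dense open set in the \emph{full} exponent space, while your exponents range over a proper linear subspace as soon as $l\ge 2$ (and always, once multiplicities are kept), which a priori could be contained entirely in the bad locus. This is what the paper's Proposition~\ref{residues} and Lemma~\ref{aberto2} supply. One takes an embedded resolution $\pi\colon X\to\P^n$ making $\big(\pi^*(D+\Hcal)\big)_{\mathrm{red}}$ simple normal crossing; for each irreducible component $E$ of $\pi^*(D+\Hcal)$ the residue of $\omega_{\lambda,z}=\pi^*(d\log\varphi_{\lambda,z})$ along $E$ is the linear form $\sum_j m_jn_jz+\sum_i k_i\lambda_i$, with $k_i>0$ if and only if $\pi(E)\subset V(x_i)$. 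Such a form cannot vanish identically on the restricted parameter space $\Lambda$, since that would force $\pi(E)$ to lie in all $n+1$ coordinate hyperplanes; hence each resonance condition cuts out a proper hyperplane of $\Lambda$. Off the union of these hyperplanes, $\omega_{\lambda,z}$ is nonvanishing along the boundary divisor (this is also exactly what rules out critical points escaping to $D\cup\Hcal$, your properness concern), so by ampleness its zeros are isolated, and the logarithmic Poincar\'e--Hopf theorem converts their number into $(-1)^n\euler(\Ucal)$. Until you supply this residue argument (or a substitute) and remove the circular reduction to the reduced case, the proof is incomplete.
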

Note that since $\euler(\P^n\setminus\Hcal)=\euler((\C^*)^n)=0$, the above formula can be rewritten as
\begin{equation}
\label{eq-top-degree-alternate}
\deg T_f=(-1)^{n-1}\euler(D\setminus \Hcal)
\end{equation}
which is sometimes more convenient to deal with.
An immediate consequence of the theorem is the following:
\begin{corollary}
\label{cor-reduced}
The degree of $T_f$ depends only on
reduced polynomial associated with $f$, that is,
\(
\deg T_f= \deg T_{f_\mathrm{red}}.
\)
\end{corollary}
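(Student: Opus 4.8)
The plan is to read the corollary off directly from Theorem~\ref{thm-top-degree}, by noting that the right-hand side of its formula sees only the \emph{support} of $D$ and not its scheme structure.

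First I would apply Theorem~\ref{thm-top-degree} to the reduced polynomial $f_{\mathrm{red}}$ itself: it is again homogeneous of degree $\ge 1$, so the theorem applies verbatim and gives
\[
\deg T_{f_{\mathrm{red}}} = (-1)^{n}\euler\big(\P^n\setminus(D_{\mathrm{red}}\cup\Hcal)\big),
\]
where $D_{\mathrm{red}}=V(f_{\mathrm{red}})$. Next I would establish the set-theoretic identity $\P^n\setminus(D\cup\Hcal)=\P^n\setminus(D_{\mathrm{red}}\cup\Hcal)$. Writing $f=f_1^{a_1}\cdots f_l^{a_l}$ with the $f_i$ distinct and irreducible, one has $f_{\mathrm{red}}=f_1\cdots f_l$, whence $V(f)=\bigcup_{i}V(f_i)=V(f_{\mathrm{red}})$ as subsets of $\P^n$. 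Thus $D$ and $D_{\mathrm{red}}$ share the same underlying point set, and the two standard complements coincide as topological spaces. Since $\euler$ is an invariant of the underlying constructible set alone, the two Euler characteristics are equal, and comparing the two instances of Theorem~\ref{thm-top-degree} yields $\deg T_f=\deg T_{f_{\mathrm{red}}}$.

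I do not anticipate any real obstacle; the only delicate point is to keep clear the distinction between the scheme $D=V(f)$, which remembers the multiplicities $a_i$, and its support, which forgets them. The entire force of the corollary lies in the fact that $\euler$ depends only on the latter---surprising from the algebraic side, as observed in the introduction, but immediate once Theorem~\ref{thm-top-degree} has been proved.
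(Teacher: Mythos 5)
Your proof is correct and is exactly the argument the paper intends: the corollary is stated there as an immediate consequence of Theorem~\ref{thm-top-degree}, precisely because $V(f)$ and $V(f_{\mathrm{red}})$ have the same underlying point set, so the standard complements coincide and the signed Euler characteristic---hence the degree---is unchanged. No gaps; your application of the theorem to $f_{\mathrm{red}}$ (which is again homogeneous of degree $\ge 1$) is the only point needing verification, and you handle it.
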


An analogous statement for the gradient map had been conjectured by Dolgachev \cite{Dol00}, for which a topological proof was first given by Dimca and Papadima in \cite{DP03}. See also \cite{FP07} for an algebro-geometric proof which relies on the study of logarithmic foliations.

\subsubsection*{{\bf Proof of Theorem \ref{thm-top-degree}}}
Let $\Ucal=\P^{n}\setminus(D\cup\Hcal)$ be the standard complement of~$D$.  
Let $f=f_1^{n_1}\dotsb f_l^{n_l}$ be an irreducible factorization.
We consider a multivalued function 
\[
\varphi_{\lambda, u}= x_{0}^{\lambda_{0}}\cdots x_{n}^{\lambda_{n}}f_1^{u_1}\cdots f_{l}^{u_l}
\]
where the parameters $(\lambda,u)\in\C^{n+1}\times \C^l$ satisfy
 \begin{eqnarray}\label{eq:parametros}
 \sum_{i=0}^{n} \lambda_{i}+\sum_{j=1}^{l}\deg(f_j)\cdot u_j=0.
 \end{eqnarray}
Such a `function' defines a global logarithmic 1-form 
\[
d\log \varphi_{\lambda, u } = \sum_{j=1}^{l} u_j \frac{df_j}{f_j} + \sum_{i=0}^{n} \lambda_i \frac{dx_i}{x_i}
\]
which turns out to be a global section of the sheaf $\Omega_{\P^n}^1(\log(D+\Hcal))$ of rational 1-forms with simple poles on $D+\Hcal$.  The critical points of $\varphi_{\lambda, u}$ in the 
standard complement 
$\Ucal$ are computed by the equation
\[
d\log \varphi_{\lambda, u}(x) = 0.
\]
It is well known that there is a nonempty Zariski open set $V\subset \C^{n+l+1}$ such that if $(\lambda, u)\in V$ then the signed Euler characteristic of $\Ucal$ coincides with the number of critical points of $\varphi_{\lambda,u}$. See \cite[Theorem 1.2]{Sil96} and \cite[Theorem 1]{Huh13}.  We will show that the degree of $T_{f}$ coincides with the number of critical points of  $\varphi_{\lambda,u}$ for a
\emph{special} choice of parameters $u$, namely of the form
\[
u=z\cdot(n_1,\dotsc, n_l) \in \C^l
\]
where $n_1,\dotsc,n_l$ are the exponents from our factorization $f=f_1^{n_1}\dotsb f_l^{n_l}$ and $z$ is some complex number. So, in order to conclude the proof of Theorem \ref{thm-top-degree}, 
it suffices to show that we can choose $(\lambda,u)\in V$ with $u=z\cdot(n_1,\dots, n_l)$ as above. 
Our argument is similar to the ones in \cite[Section 4]{Sil96} and \cite[Section 2]{Huh13},
and we will give the details in the sequel.
\medskip

We take $\lambda=(\lambda_0,\dots ,\lambda_n)\in\C^{n+1}$ and $z\in\C$
as coordinates in $\C^{n+1}\times \C$ and
 identify $(\lambda,z)$ with its corresponding point
$(\lambda_0:\cdots:\lambda_n:z)$ in $\P^{n+1}$. 
Now, consider the hyperplane 
\begin{equation*}
\Lambda=\left\{ (\lambda,z) \mid \textstyle \sum_{i=0}^{n} \lambda_{i}+z\deg(f)=0\right\} 
\qquad\subset \P^{n+1}.
\end{equation*}
To each choice of parameters $(\lambda,z)\in \Lambda$, 
the logarithmic derivative of the multivalued  function $\varphi_{\lambda,z}=x_0^{\lambda_0}\cdots x_n^{\lambda_n}f^z$ gives a global section
\[
d\log \varphi_{\lambda,z } = z \frac{df}{f} + \sum_{i=0}^{n} \lambda_i \frac{dx_i}{x_i} 
\qquad
\in {\rm H}^0(\P^n, \Omega_{\P^n}^1(\log(D+\Hcal))).
\]

\begin{lemma}\label{aberto1}
There is a nonempty Zariski open set $W\subset \Lambda$ so that if
$(\lambda,z)\in W$, then the number of 
critical points of $\varphi_{\lambda,z}$ in $\Ucal$ 
is equal to the  degree of the toric polar map $T_f$. 
\end{lemma}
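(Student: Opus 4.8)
The plan is to identify the critical points of $\varphi_{\lambda,z}$ inside $\Ucal$ with a single fibre of the toric polar map $T_f$, and then appeal to the generic behaviour of such fibres; the defining equation of $\Lambda$ is exactly what upgrades this identification from an inclusion to an equality of counts.

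First I would write out the critical point equation. For $x\in\Ucal$ one has $f(x)\neq0$ and $x_i\neq0$, so the vanishing of
\[
d\log\varphi_{\lambda,z}=z\frac{df}{f}+\sum_{i=0}^n\lambda_i\frac{dx_i}{x_i}
\]
at $x$ is equivalent, after multiplying the coefficient of $dx_i$ by $x_if$, to the system $z\,x_if_{x_i}(x)+\lambda_i f(x)=0$ for $i=0,\dots,n$. Rewriting this as $x_if_{x_i}(x)=-\tfrac{f(x)}{z}\lambda_i$ shows that, as long as $z\neq0$ and all $\lambda_i\neq0$, the point $(x_0f_{x_0}(x):\cdots:x_nf_{x_n}(x))$ is defined and equals $\bar\lambda:=(\lambda_0:\cdots:\lambda_n)$, that is $T_f(x)=\bar\lambda$.

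The reverse implication is where the hyperplane $\Lambda$ enters. If $x\in\Ucal$ and $T_f(x)=\bar\lambda$, then $x_if_{x_i}(x)=c\,\lambda_i$ for a common nonzero scalar $c$; summing over $i$ and invoking Euler's identity $\sum_i x_if_{x_i}=\deg(f)\,f$ gives $c\sum_i\lambda_i=\deg(f)\,f(x)$. Since $(\lambda,z)\in\Lambda$ means precisely $\sum_i\lambda_i=-z\deg(f)$, this forces $c=-f(x)/z$, which is the critical point equation. Thus, for $(\lambda,z)\in\Lambda$ with $z\neq0$ and all $\lambda_i\neq0$, the critical points of $\varphi_{\lambda,z}$ in $\Ucal$ are exactly the points of $T_f^{-1}(\bar\lambda)$ lying in $\Ucal$. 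I would then observe that the whole fibre already lies in $\Ucal$: if $T_f(x)=\bar\lambda$ with all $\lambda_i\neq0$ then $x_if_{x_i}(x)\neq0$, so $x_i\neq0$ and $x\notin\Hcal$; and if moreover $f(x)=0$ then Euler's identity gives $c\sum_i\lambda_i=0$, impossible since $c\neq0$ and $\sum_i\lambda_i=-z\deg(f)\neq0$. Hence $T_f^{-1}(\bar\lambda)\subseteq\Ucal$.

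Finally I would pass to genericity. The projection $(\lambda_0:\cdots:\lambda_n:z)\mapsto(\lambda_0:\cdots:\lambda_n)$ restricts to an isomorphism $\Lambda\xrightarrow{\ \sim\ }\P^n$, because $\Lambda$ is a hyperplane of $\P^{n+1}$ avoiding the centre $(0:\cdots:0:1)$ (here $\deg f\geq1$ is used), so generic parameters $(\lambda,z)\in\Lambda$ correspond to generic $\bar\lambda\in\P^n$. I would let $W\subset\Lambda$ be the preimage of the dense open subset of $\P^n$ over which the fibre of $T_f$ consists of $\deg T_f$ reduced points away from the indeterminacy locus (the empty fibre matching $\deg T_f=0$ in the nondominant case), intersected with the open conditions $z\neq0$ and $\lambda_i\neq0$; this $W$ is a nonempty Zariski open set. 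For $(\lambda,z)\in W$ the critical points of $\varphi_{\lambda,z}$ in $\Ucal$ are in bijection with the full fibre $T_f^{-1}(\bar\lambda)$, hence number exactly $\deg T_f$. The main obstacle is the reverse implication of the correspondence: pinning the scalar $c$ down as $-f(x)/z$, which is exactly what the linear relation cutting out $\Lambda$ delivers through Euler's formula. Without that relation one obtains only the containment of critical points in the fibre, and not the equality of counts.
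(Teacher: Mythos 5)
Your proof is correct and takes essentially the same route as the paper's: identify the critical points of $\varphi_{\lambda,z}$ in $\Ucal$ with the fibre $T_f^{-1}(\bar{\lambda})$ of the toric polar map, then invoke the fact that a general fibre has exactly $\deg T_f$ points. You additionally spell out two steps the paper leaves implicit --- that the defining equation of $\Lambda$ together with Euler's formula pins down the scalar $c=-f(x)/z$ in the reverse implication, and that the whole fibre over a suitably general $\bar{\lambda}$ already lies in $\Ucal$ --- which is exactly the right bookkeeping.
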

\proof
We may assume $z=1$. A critical point $x\in \Ucal$ must satisfy the equations 
\[
x_if_{x_i}(x) + f(x)\lambda_i = 0, \quad\qquad i=0,\dotsc,n
\]
or, equivalently,
\[
(x_0f_{x_0}(x):\cdots :x_nf_{x_n}(x))  =  (\lambda_0:\cdots:\lambda_n).
\]
Now $x\in \Ucal$ means in particular that $x\not\in\Hcal$ and thus
at least one of the coordinates $x_if_{x_i}(x)$ is non-zero, for otherwise
$x$ would be in $D$, by Euler's formula.
Therefore $x\in \Ucal$ is a critical point of $\varphi_{\lambda,1}$ 
if and only if $x\in T_f^{-1}(\lambda_0:\cdots:\lambda_n)$, as required.
\endproof

Let $\pi\colon X\to \P^n$ be an embedded resolution of singularities of $D$ 
so that 
\[
\big(\pi^*(D+\Hcal)\big)_\mathrm{red}
\] 
is a simple normal crossing divisor.  
Let 
\[
\omega_{\lambda,z}=\pi^*(d\log \varphi_{\lambda,z})
\]
be the respective logarithmic $1$-form with simple poles on $\pi^*(D+\Hcal)$.  

Given an irreducible component $E$ of $\pi^*(D+\Hcal)$, we denote
by $\rho_E(\lambda,z)$ the \emph{residue} of the $1$-form $\omega_{\lambda,z}$ 
on $E$. This is a complex number, defined as 
\[
\rho_E(\lambda, z)= \frac{1}{2\pi i}\int_{\gamma} \omega_{\lambda, z}
\]
where $\gamma\colon S^1\longrightarrow X\setminus \pi^*(D+\Hcal)$  
is an oriented closed path surrounding $E$. Alternatively,
in a small neighborhood of a general point of $E$, we can write
$\omega_{\lambda,z}=\rho_E\frac{dg}{g}+\eta$, 
where $g$ is a local equation for $E$ and $\eta$ is a regular $1$-form.  

\begin{proposition}
\label{residues}
Assume $z\cdot\lambda_0\cdots\lambda_n\neq 0$. Given an irreducible component $E$ of $\pi^*(D+\Hcal)$, 
there exist natural numbers $m_1,\dots, m_l$ and $k_0,\dots, k_n$,
depending on $E$, such that 
\[
 \rho_E(\lambda,z) = \sum_{j=1}^l m_j n_jz + \sum_{i=0}^n k_i \lambda_i.
\]
Moreover, $k_i>0$ if and only if $\pi(E)\subset V(x_i)$ and
$m_j>0$ if and only if $\pi(E)\subset V(f_j)$.
\end{proposition}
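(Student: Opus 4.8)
The plan is to reduce $\rho_E(\lambda,z)$ to a sum of elementary residues and to identify each of them with an order of vanishing along $E$. Since $E$ is a prime divisor, a general point $p\in E$ lies on no other component of the simple normal crossing divisor $(\pi^*(D+\Hcal))_{\mathrm{red}}$, so the residue of $\omega_{\lambda,z}$ at $E$ may be read off locally around $p$. The only delicate point is that the individual summands $dx_i/x_i$ and $df_j/f_j$ of $d\log\varphi_{\lambda,z}$ are not globally defined on $\P^n$; to turn them into logarithmic derivatives of honest functions I would first pass to an affine chart adapted to $E$.

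Concretely, since $\pi(E)$ is a nonempty closed subvariety of $\P^n$ it cannot lie in $V(x_0)\cap\dots\cap V(x_n)=\emptyset$, so there is an index $a$ with $\pi(E)\not\subset V(x_a)$. I would work in $U_a=\{x_a\neq0\}$ with coordinates $y_i=x_i/x_a$ and set $\hat f_j=f_j/x_a^{\deg f_j}$ and $\hat f=f/x_a^{k}$, where $k=\deg f$. Using $dx_i/x_i=dy_i/y_i+dx_a/x_a$ for $i\neq a$ and $df/f=d\hat f/\hat f+k\,dx_a/x_a$, the coefficient of $dx_a/x_a$ becomes $\sum_{i=0}^n\lambda_i+zk$, which vanishes by the defining relation of $\Lambda$. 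Hence on $U_a$
\[
d\log\varphi_{\lambda,z}=\sum_{i\neq a}\lambda_i\frac{dy_i}{y_i}+z\,\frac{d\hat f}{\hat f}=\sum_{i\neq a}\lambda_i\frac{dy_i}{y_i}+\sum_{j=1}^l n_j z\,\frac{d\hat f_j}{\hat f_j},
\]
the last step using $f=f_1^{n_1}\cdots f_l^{n_l}$. Thus on the chart the form is the logarithmic derivative of the genuine rational function $\prod_{i\neq a}y_i^{\lambda_i}\prod_j\hat f_j^{n_j z}$.

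Now I would pull back by $\pi$ and use additivity of the residue together with the elementary fact that the residue of a form $dg/g$ along $E$ equals $\operatorname{ord}_E(g)$: writing $g=u\,t^m$ near $p$, with $t$ a local equation of $E$, $u$ a local unit and $m=\operatorname{ord}_E(g)$, one has $dg/g=m\,dt/t+du/u$ with $du/u$ holomorphic at $p$, so the residue is $m$. Setting $k_i:=\operatorname{mult}_E\pi^*V(x_i)$ and $m_j:=\operatorname{mult}_E\pi^*V(f_j)$ — non-negative integers, being the coefficients of $E$ in the effective pullback divisors $\pi^*V(x_i)$ and $\pi^*V(f_j)$ — the identity $\operatorname{div}(x_i/x_a)=V(x_i)-V(x_a)$ on $\P^n$ gives $\operatorname{ord}_E(\pi^*y_i)=k_i-k_a$, and likewise $\operatorname{ord}_E(\pi^*\hat f_j)=m_j-(\deg f_j)k_a$. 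Since $k_a=0$ by the choice of $a$, substituting into the residue and restoring the vanishing $i=a$ term yields
\[
\rho_E(\lambda,z)=\sum_{i=0}^n k_i\lambda_i+z\sum_{j=1}^l n_j m_j=\sum_{j=1}^l m_j n_j z+\sum_{i=0}^n k_i\lambda_i,
\]
as claimed.

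Finally, the equivalences in the last sentence are intrinsic to $E$ and $\pi$: the coefficient $k_i=\operatorname{mult}_E\pi^*V(x_i)$ is positive exactly when $E$ is a component of the support $\pi^{-1}(V(x_i))$ of the pullback divisor, that is, exactly when $\pi(E)\subset V(x_i)$; the same argument applied to $V(f_j)$ gives $m_j>0\iff\pi(E)\subset V(f_j)$. I expect the main obstacle to be purely in the bookkeeping: justifying the cancellation that turns the chartwise form into a logarithmic derivative of an actual function, and confirming that the general point of $E$ avoids the other components of the normal crossing divisor, so that the remainder $du/u$ is genuinely regular and contributes no residue. Note that the displayed formula and the two equivalences hold for all parameters; the hypothesis $z\lambda_0\cdots\lambda_n\neq0$ is a genericity condition that will only be needed when these residues are fed into the subsequent count of critical points.
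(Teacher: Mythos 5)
Your proof is correct, but it takes a genuinely different route from the paper. The paper argues by induction on the sequence of blow-ups composing the embedded resolution $\pi$: it computes how residues transform under a single blow-up with smooth center $\mathcal{C}$ (the residue along the new exceptional divisor is $\sum_{Z} m_Z \rho_Z$, summed over the components $Z$ of $D+\Hcal$ containing $\mathcal{C}$, with $m_Z$ the multiplicity of $Z$ along $\mathcal{C}$, while residues along strict transforms are unchanged), so its coefficients $k_i, m_j$ are defined recursively as the multiplicities accumulated through the resolution process. You instead identify these coefficients in closed form, as $k_i=\operatorname{mult}_E \pi^* V(x_i)$ and $m_j=\operatorname{mult}_E \pi^* V(f_j)$, and compute $\rho_E$ in one step: you pass to a chart $U_a$ with $\pi(E)\not\subset V(x_a)$, use the relation $\sum_i\lambda_i+z\deg f=0$ to cancel the $dx_a/x_a$ term so the form becomes a genuine logarithmic derivative of rational functions, and then apply the elementary fact that the residue of $dg/g$ along $E$ is $\operatorname{ord}_E(g)$, with $k_a=0$ killing the correction terms. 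Your argument buys intrinsic, non-recursive descriptions of $k_i$ and $m_j$, works verbatim for any log resolution (not necessarily a composite of smooth blow-ups), and makes the final equivalences ($k_i>0 \iff \pi(E)\subset V(x_i)$, etc.) immediate from the fact that the support of $\pi^* V(x_i)$ is $\pi^{-1}(V(x_i))$; the paper's induction is more self-contained in that it only ever manipulates local equations at a single blow-up, but leaves the coefficients described only implicitly. You are also right that the hypothesis $z\cdot\lambda_0\cdots\lambda_n\neq 0$ is not needed for the residue formula itself — the paper's proof does not use it either.
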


\proof
We indicate the argument for the first blow up, as the proof then follows 
by induction. We show that 
the numbers $m_j, k_i$ are just the multiplicities that appear in
the resolution process.

Let $\pi_1\colon X_1\to \P^n$ be the first blow up 
in the resolution, with smooth center $\mathcal C$.
Let $E_1=\pi^*(\mathcal C)$ be the exceptional divisor. 
Let $\mathcal I$ be the set formed by all the irreducible components of 
$D+\Hcal$ containing $\mathcal C$ 
and $\rho_{Z}$ be the respective residue for each $Z\in \mathcal I$. We have
\[
\rho_{Z} = 
 \begin{cases}
 n_jz, & \text{ if }  Z = V(f_j) \\
\lambda_j,  & \text{ if }  Z=V(x_j). 
\end{cases}
\]
Let $\tilde{Z}$ be the strict transform of $Z$. Then
\[
\pi_1^*(Z) = m_ZE_1+\tilde{Z}
\]
where ${m_Z\ge 1}$ is {the multiplicity} of $Z$ along $\mathcal C$. Let $g$ be a local equation 
for $E_1$  on a general point {$p$} of $E_1$ . Then we can write
\[
\pi_1^*(d\log \varphi_{\lambda,z}) = \left( \sum_{Z\in \mathcal I} m_Z\rho_{Z}\right)\frac{dg}{g} + \eta
\]
for some regular $1$-form $\eta$, {on a small neighborhood of $p$}. Note that the residue of $\pi_1^*(d\log \varphi_{\lambda,z})$ on $\tilde{Z}$ does not change, i.e., {$\rho_{\tilde{Z}} = \rho_{Z}$}. Hence, if $E$ is an irreducible component of $\pi_1^*(D+\Hcal)$ then either $E=E_1$ and
\[
\rho_{E_1} = \sum_{Z\in \mathcal I} m_Z\rho_{Z}
\]
or $E=\tilde{Z}$ and $\rho_{\tilde{Z}} = \rho_{Z}$. This concludes the proof.
\endproof

\begin{lemma}\label{aberto2}
There is a nonempty open set $V\subset\Lambda$ such that if $(\lambda,z)\in V$, then the number of critical points of $\varphi_{\lambda,z}$ in $\Ucal$ is equal to $(-1)^{n}\euler(\Ucal)$.
\end{lemma}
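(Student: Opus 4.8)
The plan is to apply the genericity theorem of Silvotti \cite{Sil96} and Huh \cite{Huh13} not to the full parameter space $\C^{n+l+1}$, but directly to the family $\{\omega_{\lambda,z}\}$ of logarithmic $1$-forms pulled back to the resolution $\pi\colon X\to\P^n$, whose base is the hyperplane $\Lambda$. After the embedded resolution, $\big(\pi^*(D+\Hcal)\big)_{\mathrm{red}}$ is simple normal crossing, and for such a divisor the hypothesis ensuring that the number of isolated nondegenerate zeros of a logarithmic $1$-form in the complement equals $(-1)^n\euler(\Ucal)$ is simply that the residue $\rho_E(\lambda,z)$ along every irreducible component $E$ of $\pi^*(D+\Hcal)$ is nonzero (together with a generic nondegeneracy of the critical points, an open condition on the family). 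So the lemma reduces to producing a nonempty Zariski-open $V\subset\Lambda$ on which all of these residues are nonzero.

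This is exactly the situation controlled by Proposition \ref{residues}: restricted to $\Lambda$, i.e. for $u=z\cdot(n_1,\dots,n_l)$, each residue is the explicit linear form $\rho_E(\lambda,z)=\sum_{j}m_jn_j\,z+\sum_i k_i\lambda_i$ with nonnegative integer coefficients, at least one of which is positive: indeed $\pi(E)$ is irreducible and contained in $D\cup\Hcal$, hence in some $V(x_i)$ or some $V(f_j)$. First I would discard the loci $\{z=0\}$ and $\{\lambda_i=0\}$, which are proper closed subsets of $\Lambda$ and are in any case required for Proposition \ref{residues} to apply. It then remains to show that none of the finitely many forms $\rho_E$ vanishes identically on $\Lambda$.

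The key point, and the step I expect to carry the weight of the argument, is that $\Lambda$ is a very special, non-generic slice of the parameter space, so a priori it could be swallowed by one of the resonance hyperplanes $\{\rho_E=0\}$; ruling this out uses the positivity built into Proposition \ref{residues}. Concretely, $\Lambda$ is cut out by the form $\ell(\lambda,z)=\sum_i\lambda_i+z\deg(f)$, whose $\lambda_i$-coefficients are all equal to $1$. If $\rho_E$ vanished on $\Lambda$ it would be proportional to $\ell$, forcing all $k_i$ to equal a common constant $c$; since the $k_i$ are nonnegative and not all zero, this gives $c>0$, i.e. $k_i>0$ for every $i$. By the last assertion of Proposition \ref{residues} this means $\pi(E)\subseteq V(x_i)$ for all $i$, hence $\pi(E)\subseteq\bigcap_{i=0}^n V(x_i)=\emptyset$, which is absurd. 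Thus each $\{\rho_E=0\}\cap\Lambda$ is a proper closed subset, and the complement $V$ of their finite union, further intersected with $\{z\neq 0\}$ and the $\{\lambda_i\neq 0\}$, is a nonempty Zariski-open subset of $\Lambda$ on which Silvotti--Huh applies, giving the claimed count. The subtlety I would double-check is that, once reduced to the simple-normal-crossing model, non-resonance really is a per-component condition and not a condition on sums of residues over dense edges; otherwise the clean dichotomy forbidding every $k_i$ from being positive would need to be supplemented to handle linear combinations $\sum_E\rho_E$.
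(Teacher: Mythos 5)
Your proposal is correct and follows essentially the same route as the paper: the heart of both arguments is the observation, via Proposition~\ref{residues}, that a residue form $\rho_E$ vanishing identically on $\Lambda$ would have to be proportional to $\sum_i\lambda_i+z\deg f$, forcing all $k_i>0$ and hence $\pi(E)\subset\bigcap_i V(x_i)=\emptyset$, so each resonance locus is a proper hyperplane of $\Lambda$ and $V$ is the complement of their finite union. The only difference is presentational: where you invoke Silvotti--Huh as a black box, the paper proves that step directly (nonzero residues make $\omega_{\lambda,z}$ nonvanishing on $\pi^*(D+\Hcal)$, its zeros are then isolated because that divisor is ample, and the count is the degree of $c_n$ of the locally free sheaf $\Omega^1_X(\log\pi^*(D+\Hcal))$, which equals $(-1)^n\euler(\Ucal)$ by logarithmic Poincar\'e--Hopf), which in particular settles your final worry affirmatively: per-component nonvanishing of residues suffices.
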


\proof
To each irreducible component $E$ of $\pi^*(D+\Hcal)$
we associate  {the  subspace of $\Lambda$ given by the kernel of the linear map $\rho_E$:}
\[
\Lambda_E=\left\{ (\lambda,z)\in {\Lambda} \mid \rho_E(\lambda,z)=0\right\}
\]
We claim that $\Lambda_E$ {is a proper subspace}, i.e., {$\Lambda_E$ is a hyperplane in $\Lambda$}. 
To see this is just enough to check, using Proposition~\ref{residues}, that the vectors 
\[
(1,1,\dotsc,1,{\deg f}) \quad\text{and}\quad (k_0,k_1,\dotsc,k_n,\sum n_jm_j)
\]
are not collinear; and that is indeed the case, since otherwise
we would have $\pi(E)\subset V(x_i)$ for all $i=0,\dotsc,n$,
again by Proposition~\ref{residues}.

Now let $V$ be the complement of $\cup_{E}{\Lambda_E}$ in $\Lambda$.  
If $(\lambda,z)\in V$, then the section $\omega_{\lambda,z}\in{\rm H}^0(X, \Omega_{X}^1(\log\pi^*(D+\Hcal)))$ does not vanish on $\pi^*(D+\Hcal)$, because locally
(in a suitable basis) the coefficients defining this logarithmic 1-form are given
by the residues  {$\rho_E(\lambda, z)$}. Hence the zeros of $\omega_{\lambda,z}$ are isolated: if there were a positive
dimensional component lying in the zero locus of $\omega_{\lambda,z}$ then it would
 intersect $\pi^*(D+\Hcal)$ because this divisor is ample, a contradiction.
 
Since $\big(\pi^*(D+\Hcal)\big)_\mathrm{red}$ has simple normal crossings, 
the sheaf $\Omega_{X}^1(\log\pi^*(D+\Hcal))$ is locally free, 
and the number of zeros of $\omega_{\lambda,z}$ 
is the degree of the top Chern class of $\Omega_{X}^1(\log\pi^*(D+\Hcal))$. 
The conclusion now follows from the \emph{logarithmic Poincar\'e-Hopf Theorem} (see e.g.
\cite{Nor78,Sil96,Alu99})
\[
\int_X c_n(\Omega_{X}^1(\log\pi^*(D+\Hcal)))= (-1)^n\euler(\P^{n}\setminus(D\cup\Hcal)). 
\]
\endproof

Now Theorem \ref{thm-top-degree} is an immediate consequence of
Lemma \ref{aberto1} and Lemma \ref{aberto2}. 

\medskip

We close this section with a brief comment about 
the relationship between
toric polar maps and hyperplane arrangements.

\begin{remark}
\label{rmk-arrangements}
Let $\mathcal A = \cup_{i=0}^k V(h_i) \subset \P^n$ be a hyperplane arrangement given 
by $k+1$ distinct hyperplanes.  
We assume the arrangement is \emph{essential}, that is, the intersection of its hyperplanes is empty and so $k\geq n$. Thus, after a linear change of coordinates, we may assume $h_i=x_i$ for $i=0,\dotsc,n$. In this situation, for $f_\Acal=h_0\dotsb h_k$,
Theorem~\ref{thm-top-degree} yields
\begin{equation}
\label{id: ArrEuler}
\deg T_{f_\Acal}=(-1)^{n}\euler(\P^n\setminus \Acal).
\end{equation} 
This identity may be seen as a specialization of a theorem of Orlik and Terao \cite{OT95}, originally conjectured by A. Varchenko \cite{Var95}: The signed Euler characteristic of the complement of $\Acal$ in $\P^n$ coincides with the number of critical points (in $\P^n\setminus \Acal$) of the multivalued function $\varphi_v=h_0^{v_0}\cdots h_k^{v_k}$ for sufficiently general exponents $v_i$ satisfying $\sum v_i=0$.  
Indeed, as we have seen in the proof of Theorem~\ref{thm-top-degree}, identity (\ref{id: ArrEuler}) holds for a special choice of exponents $v_i$.
\terminou
\end{remark}


\section{The CSM class of the standard complement}
\label{section-csm}
\subsection{Basic definitions}
\label{subsection-basic-definitions}
\newcommand{\morf}{\sigma}
Here we give a brief description of the Chern-Schwartz-MacPherson class; for an accessible introduction, see \cite{AluCSMintro}.

For a variety $X$, we denote by $\Ccal(X)$ the abelian group of $\Z$-valued 
\emph{constructible functions} on $X$.
Each constructible function may be
written as a finite sum $\sum_i n_i \one_{Z_i}$ where $n_i\in \Z$, the $Z_i$ are
subvarieties of $X$, and $\one_{Z_i}$ is the function giving $1$ for $p\in Z_i$
and $0$ for $p\not\in Z_i$. The assignment $X\mapsto \Ccal(X)$ defines a
covariant functor to abelian groups: if $\morf\colon X\to Y$ is a morphism, 
define a push-forward
\[
\morf_*\colon \Ccal(X) \to \Ccal(Y)
\]
by letting $\morf_*(\one_Z)(p) = \euler(Z\cap \morf^{-1}(p))$ for any subvariety $Z\subseteq X$
and $p\in Y$, and extending by linearity. Details may be found e.g. in \cite[\S2]{Alu13}.

For \emph{complete} varieties and \emph{proper} morphisms, 
we have another covariant functor to abelian groups, 
the \emph{Chow functor} $A_*$, which assigns
to $X$ its abelian group $A_*X$ of cycles modulo rational equivalence.
Given a proper morphism $\morf\colon X\to Y$  and a subvariety $W\subseteq X$, the push-forward 
is defined as $\morf_*(W)=d\cdot \morf(W)$, where $d$ is the
degree of $\morf|_W$. 
See \cite[\S1.4]{Ful84} for details.

On the category of complete varieties and proper morphisms there exists a 
unique natural transformation $\Ccal\leadsto A_*$, such that if
$X$ is nonsingular and complete, then $\one_X \mapsto c(TX)\cap [X]$.
This fact is due to R.~MacPherson \cite{Mac74}. The class associated with $\one_X$
for a (possibly) singular $X$ agrees with the class previously defined by 
M.-H.~Schwartz \cite{Sch65a}, \cite{Sch65b}. 
We call the class associated with a constructible function $\gamma$ on a
variety $X$ the \emph{Chern-Schwartz-MacPherson class of $\gamma$},
denoted $\csm(\gamma)$. 
The CSM class has the following properties:
\begin{enumerate}
\item (Normalization) $\csm(\one_X) = c(TX)\cap [X]$ for $X$ nonsingular and complete.
\item (Functoriality)  $\morf_* \csm(\gamma) = \csm(\morf_*\gamma)$ for $\morf\colon X\to Y$ a proper morphism.
\item (Inclusion-exclusion) Given  $X_1,X_2\subseteq X$, we have
\[
\csm(\one_{X_1\cup X_2})=\csm(\one_{X_1})+\csm(\one_{X_2})-\csm(\one_{X_1\cap X_2})
\qquad\in A_*X.
\]
\item (Degree) 
\(
\int\csm(\one_X)=\euler(X)
\) 
for $X$ complete (apply functoriality to the constant map from $X$ to a point).
\end{enumerate}
\smallskip

Since the degree of the CSM class is the Euler characteristic, 
our Theorem~\ref{thm-top-degree} reads
\[
d_n=\deg T_f = (-1)^n\int \csm(\one_{\P^n\setminus (D\cup\Hcal)}).
\]
As we shall prove in Theorem~\ref{thm-A} below,   
\emph{all} multidegrees of $T_f$ can be read directly from the CSM class of the standard complement.

\subsection{Logarithmic Gauss map and the  variety of critical points}\label{subsection-loggauss-critical}
Given a homogeneous polynomial $f\in \C[x_0, \dots, x_n]$ of degree $k\ge 1$, let $D=V(f)\subset\P^n$. There is a natural way to construct   a non-ramified covering of degree   $k$ of our standard complement $\P^n\setminus (D\cup \mathcal H)$: take the hypersurface in $(\C^*)^{n+1}$ defined by the zero locus of the polynomial $f-1$. This hypersurface will play an important role in the proof of our main result, Theorem~\ref{thm-A}.  

To begin with, let us consider a reduced hypersurface $Z\subset (\C^*)^{n+1}$ given by the zero locus of a squarefree non-constant polynomial $g\in \C[x_0, \dots, x_n]$. We consider the logarithmic Gauss map 
\begin{align*}
	\gamma_Z\colon Z &\dashrightarrow \P^{n}\\
	x &\mapsto (x_{0}g_{x_0}(x):\cdots :x_{n}g_{x_n}(x))
\end{align*}
where $x = (x_0, \dots, x_n)$ and let $\Gamma_{Z}\subset \P^{n+1}\times \P^n$ denote the closure of its graph. 

We shall relate the variety of critical points, defined below, with the graph of $\gamma_Z$.
To do this, for each $w\in \P^{n}$,  let us consider the $1$-form 
\[
\alpha_w = \sum_{i=0}^{n} w_i\frac{dx_i}{x_i} .
\]
Following \cite{Huh13}, we set
\[
\mathfrak X^0(Z)=\{ (x,w)\in Z_\mathrm{reg}\times \P^{n}\ \mid \  x\,\, \text{is a critical point of}\,\,\alpha_w|_{Z_\mathrm{reg}} \}
\]
and define the \emph{variety of critical points} $\mathfrak{X}(Z)$ to be its closure, that is,
\[
\mathfrak X(Z) := \overline{\mathfrak X^0(Z)} \qquad \subseteq \P^{n+1}\times \P^n.
\]

\begin{proposition}
\label{prop-critical-set}
Let $Z\subset (\C^*)^{n+1}$ be a reduced hypersurface. The graph $\Gamma_{Z}$ and the variety of critical points $\mathfrak{X}(Z)$ coincide. 
\end{proposition}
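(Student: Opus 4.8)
The plan is to prove the equality by showing that $\mathfrak{X}^0(Z)$ is \emph{exactly} the graph of the morphism $\gamma_Z$ over $Z_\mathrm{reg}$, and then concluding by passing to closures. The heart of the matter is a pointwise linear-algebra computation translating the condition ``$x$ is a critical point of $\alpha_w|_{Z_\mathrm{reg}}$'' into the condition ``$w=\gamma_Z(x)$'' in $\P^n$. Everything else is formal.

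First I would fix a smooth point $x\in Z_\mathrm{reg}$. Since $Z$ is cut out by the squarefree polynomial $g$, its tangent space is the hyperplane $T_xZ=\ker(dg_x)=\{v\in\C^{n+1}\mid \sum_{i=0}^n g_{x_i}(x)\,v_i=0\}$, which is genuinely of codimension one because $dg_x\neq 0$ at a regular point. Writing $\alpha_w(v)=\sum_{i=0}^n (w_i/x_i)\,v_i$, where each $x_i$ is invertible as $x\in(\C^*)^{n+1}$, the point $x$ is a critical point of $\alpha_w|_{Z_\mathrm{reg}}$ precisely when the covector $(w_0/x_0,\dots,w_n/x_n)$ annihilates $\ker(dg_x)$. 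As that kernel is a hyperplane, annihilation forces the covector to be a scalar multiple of $dg_x=(g_{x_0}(x),\dots,g_{x_n}(x))$; that is, $w_i=\mu\,x_i g_{x_i}(x)$ for some $\mu\in\C$ and all $i$, which in $\P^n$ says exactly $w=(x_0g_{x_0}(x):\cdots:x_ng_{x_n}(x))=\gamma_Z(x)$. I would also record here that $\gamma_Z$ is an honest morphism on $Z_\mathrm{reg}$: at a smooth point some $g_{x_i}(x)$ is nonzero, and since $x_i\neq 0$ the product $x_ig_{x_i}(x)$ is nonzero, so the target is a well-defined point of $\P^n$. Hence $\mathfrak{X}^0(Z)$ coincides with $\{(x,\gamma_Z(x))\mid x\in Z_\mathrm{reg}\}$, the graph of $\gamma_Z|_{Z_\mathrm{reg}}$.

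Finally I would pass to closures. Because $Z$ is reduced, $Z_\mathrm{reg}$ is dense and open in $Z$, and the graph $\Gamma_Z$ of the rational map $\gamma_Z$ is by definition the closure of its graph over any dense open set on which it is defined; taking that set to be $Z_\mathrm{reg}$ gives $\Gamma_Z=\overline{\{(x,\gamma_Z(x))\mid x\in Z_\mathrm{reg}\}}=\overline{\mathfrak{X}^0(Z)}=\mathfrak{X}(Z)$. I do not expect a serious obstacle: the computation is elementary and the only point demanding care is this closure bookkeeping, namely ensuring that $\mathfrak{X}^0(Z)$ and the graph of $\gamma_Z$ are formed over the \emph{same} dense locus $Z_\mathrm{reg}$, so that no spurious components are introduced or lost in the limit. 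Since both $\mathfrak{X}(Z)$ and $\Gamma_Z$ are, by construction, the closures of one and the same locally closed set, the identification is automatic.
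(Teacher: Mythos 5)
Your proposal is correct, and its skeleton coincides with the paper's: both identify $\mathfrak{X}^0(Z)$ with the graph of $\gamma_Z$ restricted to $Z_\mathrm{reg}$ and then pass to closures over that common dense open locus. The difference lies in how the pointwise step is executed. The paper chooses a local parametrization $y\mapsto (y,\varphi(y))$ of $Z$ near a smooth point, pulls back $\alpha_w$, and matches the resulting equations $w_{n+1}y_i\frac{\partial\varphi}{\partial y_i}+w_i\varphi=0$ against the expression of $\gamma_Z$ in those coordinates. You instead argue intrinsically: the critical-point condition says the covector $\bigl(w_0/x_0,\dotsc,w_n/x_n\bigr)$ annihilates $T_xZ=\ker(dg_x)$, and since the annihilator of a hyperplane in $\C^{n+1}$ is the line spanned by $dg_x$, this forces $w=(x_0g_{x_0}(x):\cdots:x_ng_{x_n}(x))=\gamma_Z(x)$. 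Your route avoids choosing a parametrization and the attendant chain-rule bookkeeping, and it isolates the actual content of the statement: the conormal line of a reduced hypersurface at a smooth point is spanned by $dg_x$ (the Jacobian criterion, valid here because $g$ is squarefree). The paper's version is more computational but makes the equivalence of equations completely explicit. One detail you leave tacit: from $w_i=\mu\,x_ig_{x_i}(x)$ you need $\mu\neq 0$ to conclude equality in $\P^n$; this holds because $w$, being a representative of a point of $\P^n$, is nonzero and every $x_i\neq 0$, so the covector above is nonzero and cannot be the zero multiple of $dg_x$. With that observation made explicit, your argument is complete and fully rigorous.
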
 
\proof
Let $x$ be a smooth point of $Z$. We may assume, without loss of generality, that there is  a local parametrization around $x$ given by
\[
y \mapsto (y, \varphi(y)) 
\]  
where $y=(y_1,\dots,y_{n})$ is a local system of coordinates. Note that  $Z$ has  local equation  $g=y_{n+1}-\varphi(y)$.

Critical points of the $1$-form $\alpha_w|_{Z_\mathrm{reg}}$ in $Z$ are given by the equation 
\[
\sum_{i=1}^{n} w_i\frac{dy_i}{y_i}+w_{n+1}\frac{d\varphi}{\varphi} = 0 
\]
or equivalently 
\begin{eqnarray}\label{eq}
w_{n+1}y_i\frac{\partial \varphi}{\partial y_i} + w_i\varphi = 0 
\end{eqnarray}
for all $i=1,\dots,n$. Since we have 
\[
\gamma_Z(y,\varphi(y))=\left(-y_1\frac{\partial \varphi(y)}{\partial y_1}:\cdots:-y_{n}\frac{\partial \varphi(y)}{\partial y_{n}}:\varphi(y)\right)
\]
then $\gamma_Z(y,\varphi(y))=(w_1:\cdots:w_{n+1})$ if and only if (\ref{eq}) holds for all $i=1,\dots,n$. Therefore, $\left(x, w\right)$ lies in the graph of $\gamma_Z$ if and only if $x$  is a critical point of $\alpha_w|_{Z_\mathrm{reg}}$. 
\endproof

The following result is a direct consequence of Proposition~\ref{prop-critical-set}. 

\begin{corollary}
\label{cor-critical-proj}
Let $Z\subset (\C^*)^{n+1}$ be a reduced hypersurface and write
  \begin{equation*}
 [\mathfrak{X}(Z)] = \sum_{i=0}^{n}  v_{i}\,[\P^{n-i}\times \P^i] 
\qquad\qquad \in A_*(\P^{n+1}\times \P^n). 
\end{equation*}
Then the $v_i$ are the multidegrees of $\gamma_Z$, that is, for all $i=0,\dots, n$ we have
\[
v_i = \deg \left( \overline {\gamma_Z^{-1}(\P^{n-i})}\right) 
\]
where $\P^{n-i}$ stands for a general linear subspace of codimension $i$. 
\end{corollary}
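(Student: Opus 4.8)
The plan is to reduce the statement to a routine bookkeeping in the Chow ring of $\P^{n+1}\times\P^n$, using Proposition~\ref{prop-critical-set} to replace the variety of critical points $\mathfrak{X}(Z)$ by the closure $\Gamma_Z$ of the graph of the logarithmic Gauss map. Recall that $A^*(\P^{n+1}\times\P^n)=\Z[h,k]/(h^{n+2},k^{n+1})$, where $h$ and $k$ are the pullbacks of the hyperplane classes of the two factors, that $\int h^{n+1}k^n=1$, and that in this ring $[\P^{n-i}\times\P^i]=h^{i+1}k^{n-i}$ (since $\P^{n-i}\subset\P^{n+1}$ has codimension $i+1$ and $\P^i\subset\P^n$ has codimension $n-i$). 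Since $\mathfrak{X}(Z)$ is $n$-dimensional, its class expands in the basis $\{[\P^{n-i}\times\P^i]\}_{i=0}^n$, and each coefficient is recovered by pairing against the complementary monomial.

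Concretely, I would compute, for each $i$,
\[
v_i=\int_{\P^{n+1}\times\P^n}[\mathfrak{X}(Z)]\cdot h^{n-i}k^{i},
\]
the point being that $h^{n-i}k^i\cdot[\P^{n-j}\times\P^j]=h^{n+1}k^n$ precisely when $j=i$ and vanishes otherwise (for $j>i$ the $h$-exponent exceeds $n+1$, for $j<i$ the $k$-exponent exceeds $n$). Geometrically, $h^{n-i}k^i$ is represented by $L\times M$ with $L\subset\P^{n+1}$ a general linear subspace of dimension $i+1$ and $M\subset\P^n$ a general linear subspace of dimension $n-i$; thus $v_i$ counts, with multiplicity, the points of $\mathfrak{X}(Z)\cap(L\times M)$.

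Next I would invoke Proposition~\ref{prop-critical-set}: since $\mathfrak{X}(Z)=\Gamma_Z$, every point of the intersection has the form $(x,\gamma_Z(x))$ with $x\in L$ and $\gamma_Z(x)\in M$, that is, with $x\in\overline{\gamma_Z^{-1}(M)}\cap L$. For general $M$ of codimension $i$, the locus $\overline{\gamma_Z^{-1}(M)}$ is a subvariety of $Z\subseteq\P^{n+1}$ of dimension $n-i$, so intersecting it with the general codimension-$(n-i)$ subspace $L$ of $\P^{n+1}$ produces exactly $\deg\big(\overline{\gamma_Z^{-1}(\P^{n-i})}\big)$ points, by the very definition of the degree of a projective variety. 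This yields $v_i=\deg\big(\overline{\gamma_Z^{-1}(\P^{n-i})}\big)$, as asserted; in effect, the corollary is just the general principle that the class of a subvariety of a product of projective spaces records the multidegrees of the associated projections.

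The one point requiring care is the general position, which I would secure with Kleiman's transversality theorem applied to the actions of $\PGL$ on the two factors. For generic $L$ and $M$ the intersection $\mathfrak{X}(Z)\cap(L\times M)$ is transverse, hence reduced, so each point contributes with multiplicity one; and, crucially, it avoids the proper closed subset $\mathfrak{X}(Z)\setminus\mathfrak{X}^0(Z)$ created in passing to the closure, so that all intersection points genuinely lie over $Z_\mathrm{reg}$ and match the points of $\overline{\gamma_Z^{-1}(M)}\cap L$ bijectively. The degenerate case, in which $\gamma_Z$ is not dominant and $\overline{\gamma_Z^{-1}(\P^{n-i})}$ has dimension strictly less than $n-i$, is automatically consistent: there a general $L$ misses the locus and both sides vanish.
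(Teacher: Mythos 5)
Your proof is correct and is essentially the paper's own approach: the paper offers no separate argument, presenting the corollary as a direct consequence of the proposition identifying $\mathfrak{X}(Z)$ with the graph closure $\Gamma_Z$, combined with the standard fact (already used in Section 2 to define multidegrees) that the coefficients of the class of a graph in $A_*(\P^{n+1}\times\P^n)$ are the degrees of the closures of preimages of general linear subspaces. Your write-up simply makes that bookkeeping explicit --- pairing against complementary monomials, Kleiman transversality, and avoidance of the boundary $\mathfrak{X}(Z)\setminus\mathfrak{X}^0(Z)$ --- all of which is sound.
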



The above result will be applied in the following context. Starting with any  homogeneous polynomial $f\in \C[x_0, \dots, x_n]$ of degree $k\ge 1$, we consider the hypersurface $Z\subset (\C^*)^{n+1}$ defined by the zero locus of $g=f-1$. 
Note that $Z$ is smooth. Moreover, the restriction to $Z$ of the linear projection $\pi\colon \P^{n+1} \dashrightarrow \P^n$ from  $p=(0:\cdots :0:1)$ gives a degree $k$ non-ramified  covering of the standard complement $\mathcal U = \P^n\setminus (D\cup \mathcal H)$. 
Here we see that maps $\gamma_Z$ and $T_f$ are closely related to each other: The
diagram
\begin{eqnarray}
\label{diag-log-toric}
\xymatrix { 
	Z \ar@{->}[d]_\pi^{k:1} \ar@{->}[rrd]^{\gamma_{Z}}  &  &  \\
\mathcal U  \ar@{->}[rr]_{T_f} &  & \P^n \\
}
\end{eqnarray}
is commutative. This allows us to recover the multidegrees of the toric polar map $T_f$ from the bidegrees $v_i$ of the variety of critical points $\mathfrak{X}(Z)$. 

\begin{proposition}
\label{prop-critical-degproj}
Let $f\in \C[x_0, \dots, x_n]$ be a  homogeneous polynomial  of degree $k\ge 1$ and  consider the hypersurface $Z\subset (\C^*)^{n+1}$ defined by the zero locus of $g=f-1$. Then
\begin{equation*}
 [\mathfrak{X}(Z)] = \sum_{i=0}^{n}  k\cdot d_{i}\,[\P^{n-i}\times \P^i] 
\qquad\qquad \in A_*(\P^{n+1}\times \P^n) 
\end{equation*}
where $d_0, \dots, d_n$ are multidegrees of $T_f$. 
\end{proposition}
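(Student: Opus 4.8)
The plan is to prove the asserted equality of classes one coefficient at a time, reducing each to a single application of the projection formula. By Corollary~\ref{cor-critical-proj}, if we write $[\mathfrak X(Z)]=\sum_i v_i\,[\P^{n-i}\times\P^i]$, then $v_i=\deg\overline{\gamma_Z^{-1}(M)}$ for a general linear subspace $M=\P^{n-i}\subset\P^n$, the degree being measured in $\P^{n+1}$. On the other hand, by the very definition of the multidegrees, $d_i=\deg\overline{T_f^{-1}(M)}$, measured in $\P^n$, for the same general $M$. Thus it suffices to fix a general $M=\P^{n-i}$ and establish $\deg\widetilde W=k\,\deg W$, where $W:=\overline{T_f^{-1}(M)}\subset\P^n$ and $\widetilde W:=\overline{\gamma_Z^{-1}(M)}\subset\P^{n+1}$.

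First I would record the geometry of the two loci. Using the commutativity $\gamma_Z=T_f\circ(\pi|_Z)$ together with the fact that $\pi|_Z\colon Z\to\Ucal$ is an unramified $k$-sheeted covering, one gets $\gamma_Z^{-1}(M)=(\pi|_Z)^{-1}\big(T_f^{-1}(M)\cap\Ucal\big)$; hence $\pi(\widetilde W)=W$ and the restriction $\pi|_{\widetilde W}\colon\widetilde W\to W$ is generically finite of degree $k$, since over the dense open set $T_f^{-1}(M)\cap\Ucal\subset W$ every fiber consists of exactly $k$ points (on each component of $W$), so that $(\pi|_{\widetilde W})_*[\widetilde W]=k[W]$. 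For general $M$ both $\widetilde W$ and $W$ have dimension $n-i$; in the degenerate case where $T_f$ is not dominant, both $v_i$ and $d_i$ vanish once $i$ exceeds the dimension of the image, and the identity is trivial there.

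Next I would check that $\pi|_{\widetilde W}$ is an honest morphism, i.e. that the center of projection $p=(0:\cdots:0:1)$ does not lie on $\widetilde W$. Indeed $\widetilde W\subseteq\overline Z$, and the projective closure $\overline Z$ is contained in the hypersurface $V\big(f(x_0,\dots,x_n)-x_{n+1}^{\,k}\big)\subset\P^{n+1}$ obtained by homogenizing $f-1$; this hypersurface does not contain $p$, since there $f(0,\dots,0)-1^{k}=-1\neq0$. Hence $\pi$ is defined along $\widetilde W$ and $\pi|_{\widetilde W}$ is proper.

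Finally I would invoke the projection formula. Because $\pi$ is the linear projection from $p$, the pullback of the hyperplane class of $\P^n$ is the restriction to $\widetilde W$ of the hyperplane class $H$ of $\P^{n+1}$, that is $\pi^*H=H|_{\widetilde W}$. Applying the projection formula to $\pi|_{\widetilde W}\colon\widetilde W\to\P^n$ against the class $H^{n-i}$ and taking degrees, the computation reads
\[
v_i=\deg_{\P^{n+1}}\widetilde W=\int_{\widetilde W}(\pi^*H)^{n-i}=\int_{\P^n}H^{n-i}\cap(\pi|_{\widetilde W})_*[\widetilde W]=k\int_{\P^n}H^{n-i}\cap[W]=k\,\deg_{\P^n}W=k\,d_i,
\]
which is exactly the claimed identity. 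The step requiring the most care is the middle one: one must know that $\pi|_{\widetilde W}$ is a genuine morphism of degree \emph{exactly} $k$ onto $W$, for which the facts $p\notin\widetilde W$ and that the covering $\pi|_Z$ is uniformly $k$-to-$1$ over every component of $W$ are essential; once these are in place, the projection formula and the compatibility $\pi^*H=H|_{\widetilde W}$ deliver the result directly.
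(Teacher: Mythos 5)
Your proof is correct and takes essentially the same route as the paper's: both reduce to Corollary~\ref{cor-critical-proj}, use the commutativity of diagram~(\ref{diag-log-toric}) together with $p\notin\overline{Z}$, and obtain $\deg\overline{\gamma_Z^{-1}(\P^{n-i})}=k\cdot\deg\overline{T_f^{-1}(\P^{n-i})}$ by pushing the count down through the projection from $p$. The only cosmetic difference is that the paper performs this last degree computation by intersecting with a general $\P^{i+1}$ passing through $p$, whereas you phrase it via the projection formula $\pi_*[\widetilde W]=k[W]$ (and you add an explicit check that $p\notin\overline{Z}$ using the homogenization $f-x_{n+1}^{k}$, a point the paper merely asserts).
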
 

\proof
By Corollary~\ref{cor-critical-proj}, each bidegree $v_i$ of $\mathfrak{X}(Z)$ coincides with the $i$-th multidegree of the logarithmic Gauss map $\gamma_Z$, i.e. with the degree of the  subvariety $\overline {\gamma_Z^{-1}(\P^{n-i})}$.    

Now, since the closure $\overline{Z}$ in $\P^{n+1}$ does not contain the center $p$ of the linear projection $\pi$, then $v_i$    can be computed by intersecting $\overline {\gamma_Z^{-1}(\P^{n-i})}$ with a general linear subspace $\P^{i+1}$ passing through $p$.    Hence, it follows from the commutativity of diagram (\ref{diag-log-toric}) that $v_i$ is $k$ times $d_i$  (the degree of the subvariety $\overline{T_f^{-1}(\P^{n-i})}$). This completes the proof of the proposition.  
\endproof

\subsection{Main result}
\label{subsection-mainresult}
We are ready to prove our main result, namely that  multidegrees of the toric polar map can be read directly from the CSM class of the standard complement. 
This is Theorem~\ref{conj-A} of the introduction.

\begin{theorem}
\label{thm-A}
Let $f\in \C[x_0,\dotsc,x_n]$ be a homogeneous polynomial and
let $D=V(f)\subset\P^n$ be the associated hypersurface. 
Let $d_0,\dotsc,d_n$ be the multidegrees of the toric polar map $T_f$.  Then
\begin{equation*}
\label{New-eq-conj-1}
\csm(\one_{\P^n\setminus (D\cup\Hcal)}) = \sum_{i=0}^{n} (-1)^{i} d_{i}\,[\P^{n-i}] 
\qquad\qquad \in A_*\P^n.
\end{equation*}
\end{theorem}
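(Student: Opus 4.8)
The plan is to pull the entire computation back to the degree-$k$ unramified covering $\pi\colon Z\to\Ucal$ constructed above, where the variety of critical points $\mathfrak{X}(Z)$ is already under control by Proposition~\ref{prop-critical-degproj}, and then to invoke the logarithmic CSM formula of \cite{MRWW22} to convert the bidegrees of $\mathfrak{X}(Z)$ into the CSM class of $Z$.

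First I would use functoriality of the CSM class along the covering. Let $\iota\colon\overline Z\hookrightarrow\P^{n+1}$ be the closure of $Z$; it is the degree-$k$ hypersurface $V(f-x_{n+1}^{k})$ and it does not contain the center $p=(0:\cdots:0:1)$ of the projection, so $\pi$ extends to a finite morphism $\overline\pi\colon\overline Z\to\P^n$ of degree $k$. Computing fibers shows that $\overline\pi^{-1}(y)\cap Z$ consists of $k$ points when $y\in\Ucal$ and is empty otherwise (a point of $D\setminus\Hcal$ forces $f=0$ on the fiber, a point of $\Hcal$ forces a vanishing coordinate). Hence $\overline\pi_*\one_Z=k\,\one_{\Ucal}$ as constructible functions, and MacPherson's naturality gives
\[
\overline\pi_*\csm(\one_Z)=\csm(\overline\pi_*\one_Z)=k\,\csm(\one_{\Ucal})\qquad\in A_*\P^n .
\]
Thus it suffices to compute $\overline\pi_*\csm(\one_Z)$ and divide by $k$.

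Next I would apply \cite[Theorem~1.3]{MRWW22} to the smooth variety $Z$, closed in the torus $(\C^*)^{n+1}$. That result computes the CSM class of such a $Z$ from the bidegrees of its variety of critical points; since $\mathfrak{X}(Z)$ is the graph of the logarithmic Gauss map (Proposition~\ref{prop-critical-set}) with bidegrees $v_i$, it should yield
\[
\iota_*\csm(\one_Z)=\sum_{i=0}^{n}(-1)^{i}v_i\,[\P^{n-i}]\qquad\in A_*\P^{n+1}.
\]
As $\overline\pi=\pi\circ\iota$ with $\pi$ the linear projection from $p$, and each $[\P^{n-i}]$ $(n-i\le n)$ is represented by a linear subspace avoiding $p$ that $\pi$ maps isomorphically onto $[\P^{n-i}]\in A_*\P^n$, I get $\overline\pi_*\csm(\one_Z)=\pi_*\iota_*\csm(\one_Z)=\sum_i(-1)^iv_i[\P^{n-i}]$. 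Feeding in the relation $v_i=k\,d_i$ from Proposition~\ref{prop-critical-degproj} and dividing by $k$ produces
\[
\csm(\one_{\Ucal})=\frac{1}{k}\sum_{i=0}^{n}(-1)^{i}v_i\,[\P^{n-i}]=\sum_{i=0}^{n}(-1)^{i}d_i\,[\P^{n-i}],
\]
which is exactly the assertion. As consistency checks, the leading term uses $v_0=\deg\overline Z=k$ (Corollary~\ref{cor-critical-proj}), and taking degrees recovers $(-1)^n d_n=\euler(\Ucal)$, that is, Theorem~\ref{thm-top-degree}.

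The main obstacle is the correct invocation of \cite[Theorem~1.3]{MRWW22}: one must verify that $Z$ meets its hypotheses (smoothness and closedness in the torus, both clear here) and, more delicately, reconcile its normalization — the indexing of bidegrees, the sign conventions, and the chosen compactification of $(\C^*)^{n+1}$ — with the bidegrees $v_i$ of $\mathfrak{X}(Z)$ as defined in Corollary~\ref{cor-critical-proj}. The remaining point, identifying $\pi_*[\P^{n-i}]$ with $[\P^{n-i}]$, is elementary but should be justified from $p\notin\overline Z$ together with the fact that a general linear subspace of dimension $\le n$ misses $p$ and is carried isomorphically by the projection; this is precisely the bookkeeping already used in the proof of Proposition~\ref{prop-critical-degproj}.
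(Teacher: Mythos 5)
Your proposal is correct and follows essentially the same route as the paper's proof: the $k$-fold cover $Z=V(f-1)\subset(\C^*)^{n+1}$, the identification of $\csm(\one_Z)$ with $\sum_i(-1)^i v_i\,[\P^{n-i}]$ via \cite[Theorem~1.3]{MRWW22} (using smoothness of $Z$ so that Chern-Mather equals CSM), the relation $v_i=k\,d_i$ from Proposition~\ref{prop-critical-degproj}, and MacPherson functoriality along the projection followed by division by $k$ in the torsion-free group $A_*\P^n$. The only difference is technical: where you push forward along the finite morphism $\overline{\pi}\colon\overline{Z}\to\P^n$ and justify $\pi_*[\P^{n-i}]=[\P^{n-i}]$ by choosing representatives avoiding $p$ (note that, strictly speaking, $\pi_*$ is not defined on $A_*\P^{n+1}$ since $\pi$ is only a rational map there, so this step needs exactly the kind of argument you flag), the paper makes the same bookkeeping precise by passing to the blow-up $\operatorname{Bl}_p\P^{n+1}$, on which the projection becomes an honest proper morphism and the class of $\one_{\tilde Z}$ is expressed in terms of strict transforms $[\tilde{\P}^{n-i}]$.
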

\smallskip

\proof
Let $\mathcal U = \P^n\setminus (D\cup\Hcal)$ be the standard complement. Let $k$ be the degree of $f$. As 
before, we consider the $k$-fold covering $Z\subset (\C^*)^{n+1}$ of $\mathcal U$ defined by the zero locus of $g=f-1$. See diagram~(\ref{diag-log-toric}). Now enters the key result: by \cite[Theorem~1.3]{MRWW22}, the bidegrees of the variety of critical points $\mathfrak{X}(Z)$ are given by the Chern-Mather class of $Z$; but $Z$ is smooth, therefore its Chern-Mather class agrees with its CSM class. Hence we have
\begin{equation*}
\csm(\one_{Z}) = \sum_{i=0}^{n} (-1)^{i} v_i\,[\P^{n-i}] 
\qquad\qquad \in A_*\P^{n+1}
\end{equation*}
where
\begin{equation*}
 [\mathfrak{X}(Z)] = \sum_{i=0}^{n}  v_i\,[\P^{n-i}\times \P^i] 
\qquad\qquad \in A_*(\P^{n+1}\times \P^n).
\end{equation*}
It is worth noticing that in \cite{MRWW22}, the total space of critical
points $\mathfrak{X}(Z)$ is defined as a subvariety of $\P^{n+1}\times \P^{n+1}$. 
Since $Z$ is not equal to $(\C^*)^{n+1}$, their definition is a cone over ours, hence
both constructions define the same sequence of numbers $v_i$. We refer the reader to
\cite[Remark~1.4]{MRWW22} for more details.
\smallskip

To finish, we need only to relate the classes $\csm(\one_Z)$ and $\csm(\one_\mathcal{U})$.
Aiming to apply the functoriality of the CSM class, we consider the blow-up $\sigma\colon \operatorname{Bl}_p\P^{n+1}\to \P^{n+1}$, 
where $p$ is the center of the linear projection $\pi\colon \P^{n+1}\dashrightarrow \P^n$. Let $\tilde{Z}$ be the inverse image of $Z$ by $\sigma$. Since the closure $\overline{Z}$ of $Z$ in $\P^{n+1}$ does not contain $p$, we can write
\begin{equation*}
\csm(\one_{\tilde{Z}}) = \sum_{i=0}^{n} (-1)^{i} v_{i}\,[\tilde{\P}^{n-i}] 
\qquad\qquad \in A_* \operatorname{Bl}_p\P^{n+1}
\end{equation*}
where $\tilde{\P}^i$ is the strict transform of a general linear subspace $\P^i$.

Let $\tilde{\pi} = \pi\circ \sigma$.  On the one hand, the functoriality of the CSM class applied to the proper morphism $\tilde{\pi}$  yields
\begin{equation*}
\tilde{\pi}_*\csm(\one_{\tilde{Z}}) = \csm (\tilde{\pi}_*\one_{\tilde{Z}})
\end{equation*}
and since $\tilde{\pi}_*\one_{\tilde{Z}} = k\cdot\one_{\mathcal U}$ we get
\begin{equation}
\label{eq-funct}
\tilde{\pi}_*\csm(\one_{\tilde{Z}}) =  k\cdot \csm(\one_{\mathcal U})
\end{equation}
On the other hand, Proposition~\ref{prop-critical-degproj} implies
\begin{equation}
\label{eq-kU}
\tilde{\pi}_*\csm(\one_{\tilde{Z}}) = k\cdot\sum_{i=0}^{n} (-1)^{i} d_{i}\,[\P^{n-i}].
\end{equation}
Putting together (\ref{eq-funct}) and (\ref{eq-kU}) we conclude that 
\begin{equation*}
\csm(\one_{\mathcal U}) = \sum_{i=0}^{n} (-1)^{i} d_{i}\,[\P^{n-i}] 
\end{equation*}
and this finishes the proof of the theorem. 
\endproof

Since the coefficients of the CSM class are topological invariants of the support, Theorem~\ref{thm-A} yields a generalization of Corollary~\ref{cor-reduced}:
\begin{corollary}
\label{cor-reduced-2}
Let $f\in \C[x_0,\dotsc,x_n]$ be a homogeneous polynomial.
Then the toric polar maps $T_f$ and $T_{f_{\mathrm{red}}}$ have the same multidegrees.
\end{corollary}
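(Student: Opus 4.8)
The plan is to deduce this directly from Theorem~\ref{thm-A}, exploiting the fact that the Chern-Schwartz-MacPherson class of a locally closed set depends only on its underlying constructible function, hence only on its support. The essential observation is that the standard complement is insensitive to multiplicities: since $f$ and $f_{\mathrm{red}}$ have the same radical, the hypersurfaces $V(f)$ and $V(f_{\mathrm{red}})$ share the same support in $\P^n$, and therefore the two open sets $\P^n\setminus(V(f)\cup\Hcal)$ and $\P^n\setminus(V(f_{\mathrm{red}})\cup\Hcal)$ coincide as subsets of $\P^n$. (Note this holds regardless of whether $f$ shares a factor with $x_0\cdots x_n$, since any such factor already lies in $\Hcal$.)

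Writing $\mathcal U = \P^n\setminus(V(f)\cup\Hcal)$, which by the above is literally the same open set for both $f$ and $f_{\mathrm{red}}$, I would observe that the constructible function $\one_{\mathcal U}$ is identical in the two cases, as it records only the point set $\mathcal U$. Consequently the single element $\csm(\one_{\mathcal U})\in A_*\P^n$ is the same whether we associate it to $f$ or to $f_{\mathrm{red}}$.

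Now I would invoke Theorem~\ref{thm-A} twice. Letting $d_0,\dotsc,d_n$ denote the multidegrees of $T_f$ and $d_0',\dotsc,d_n'$ those of $T_{f_{\mathrm{red}}}$, the theorem gives
\[
\sum_{i=0}^{n}(-1)^i d_i\,[\P^{n-i}] = \csm(\one_{\mathcal U}) = \sum_{i=0}^{n}(-1)^i d_i'\,[\P^{n-i}] \qquad\in A_*\P^n.
\]
Since the classes $[\P^n],[\P^{n-1}],\dotsc,[\P^0]$ form a $\Z$-basis of $A_*\P^n$, I would compare coefficients termwise, which forces $(-1)^i d_i = (-1)^i d_i'$, and hence $d_i = d_i'$ for every $i=0,\dotsc,n$.

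I do not anticipate any genuine obstacle: the corollary is a formal consequence of Theorem~\ref{thm-A} together with the set-theoretic nature of the complement. The only point requiring a moment's care is the justification that $\csm(\one_{\mathcal U})$ depends solely on the support and not on any scheme structure; this is immediate from the construction of $\csm$ as a natural transformation on constructible functions, reviewed in \S\ref{subsection-basic-definitions}, since the input $\one_{\mathcal U}$ remembers only the underlying set $\mathcal U$. This argument also subsumes Corollary~\ref{cor-reduced}, which concerns only the top multidegree $d_n=\deg T_f$.
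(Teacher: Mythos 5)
Your proof is correct and is essentially the paper's own argument: the paper deduces the corollary from Theorem~\ref{thm-A} in one line, noting that the coefficients of the CSM class depend only on the support of the standard complement, which is unchanged when $f$ is replaced by $f_{\mathrm{red}}$. Your version merely spells out the details (equality of supports, comparing coefficients in the basis $[\P^{n-i}]$ of $A_*\P^n$), all of which are sound.
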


The relationship with CSM classes provides useful geometric insight for computations involving multidegrees.
As an example, Theorem~\ref{thm-A} has been used on a problem about inverses of monomial Cremona transformations \cite{FM22}.
In the context of toric varieties, J. Huh proved \cite{Huh13} that for nondegenerated polynomials, the CSM class can be computed via mixed volumes, 
yielding a useful tool for concrete calculations.


\section{Applications}
\label{section-applications}


\subsection{General position}
\label{subsection-general-position}

In this subsection we collect some formulas for the multidegrees of toric polar maps of hypersurfaces in general position relatively to \emph{all} coordinate hyperplanes; 
when that is the case, by abuse of language we simply say that the hypersurface is in \emph{general position}.
As usual, by a \emph{general translate} of a subvariety $V\subset\P^n$ we mean a general member of the orbit of $V$ under the action of the linear group  ${\rm PGL}_{n+1}(\C)$. 
Let $h=c_1(\Ocal(1))\in A_*\P^n$ be the hyperplane class.

\begin{lemma}
	\label{lemma-union-general-section}
	Let $V\subset\P^n$ be any hypersurface. Let $D$ be a general translate of $V$.
	Then
	\[
	\csm(\one_{\P^n\setminus D}) =  (1+h)^{n+1} \cdot \csm(\one_{\P^n\setminus(D\cup\Hcal)})
	\qquad \in A_*\P^n.
	\]
\end{lemma}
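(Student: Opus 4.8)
The plan is to strip the coordinate hyperplanes off $\P^n\setminus D$ by inclusion-exclusion and to evaluate each resulting term with the generic hyperplane section formula for Chern-Schwartz-MacPherson classes. Write $H_i=V(x_i)$ and, for $\emptyset\ne S\subseteq\{0,\dots,n\}$, let $H_S=\bigcap_{i\in S}H_i$ be the associated coordinate linear subspace (a $\P^{n-|S|}$). Applying the inclusion-exclusion property of the CSM class to $(\P^n\setminus D)\setminus\Hcal$ gives
\[
\csm(\one_{\P^n\setminus(D\cup\Hcal)})=\csm(\one_{\P^n\setminus D})-\sum_{\emptyset\ne S}(-1)^{|S|-1}\,\csm(\one_{H_S\setminus D}),
\]
so everything reduces to computing the classes $\csm(\one_{H_S\setminus D})$ in terms of $\csm(\one_{\P^n\setminus D})$.

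For the key input I would use the generic hyperplane section formula: for a constructible set $W\subseteq\P^n$ and a hyperplane $H$ general with respect to $W$, one has $\csm(\one_{W\cap H})=\frac{h}{1+h}\cap\csm(\one_W)$ in $A_*\P^n$. This follows for smooth $W$ from the adjunction sequence $0\to T(W\cap H)\to TW|_{W\cap H}\to\Ocal(1)\to 0$ together with $[W\cap H]=h\cap[W]$, and in general from functoriality and resolution; I would simply cite \cite{Alu03,AluCSMintro}. Now comes the crucial point: although the coordinate subspaces $H_S$ are very far from generic relative to one another, the hypothesis that $D$ is a general translate lets me replace each $H_S$ by a \emph{general} linear subspace $L$ of the same codimension $|S|$ without changing $\csm(\one_{H_S\setminus D})$. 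Indeed, a projectivity $g\in\PGL_{n+1}(\C)$ carrying $H_S$ to $L$ acts trivially on $A_*\P^n$, so $\csm(\one_{H_S\setminus D})=\csm(\one_{L\setminus g(D)})$, and since $g(D)$ is again a general translate this equals $\csm(\one_{L\setminus D})$. Realizing $L$ as the intersection of a general flag of hyperplanes and iterating the section formula $|S|$ times (the genericity hypothesis being satisfied at each step because the flag is general) yields
\[
\csm(\one_{H_S\setminus D})=\left(\frac{h}{1+h}\right)^{|S|}\cap\csm(\one_{\P^n\setminus D}).
\]

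Substituting this into the inclusion-exclusion expression and grouping the $\binom{n+1}{j}$ subsets of each size $j$, the whole operator acting on $\csm(\one_{\P^n\setminus D})$ collapses to $\big(1-\tfrac{h}{1+h}\big)^{n+1}=(1+h)^{-(n+1)}$, giving $\csm(\one_{\P^n\setminus(D\cup\Hcal)})=(1+h)^{-(n+1)}\cap\csm(\one_{\P^n\setminus D})$, which is the asserted identity after multiplying through by $(1+h)^{n+1}$. I expect the only delicate step to be the reduction of the special coordinate subspaces $H_S$ to generic linear subspaces: it must be argued carefully from the general-translate hypothesis and the triviality of the $\PGL_{n+1}(\C)$-action on the Chow ring, and one must verify that the section formula's genericity hypothesis is genuinely available at every stage of the iteration (which it is, once $H_S$ has been replaced by a general flag). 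The binomial collapse in the last step is then a routine calculation.
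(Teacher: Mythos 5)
Your proof is correct, and it rests on the same key input as the paper's — the generic hyperplane section formula $\csm(\one_{W\cap H})=\frac{h}{1+h}\cap\csm(\one_W)$ (in the paper this is quoted from Ohmoto and from \cite[Proposition~2.6]{Alu13}, which are the appropriate citations rather than \cite{Alu03}) — but the decomposition is organized differently. The paper first converts the section formula, via inclusion-exclusion, into the complement form $\csm(\one_{\P^n\setminus W})=(1+h)\cdot\csm(\one_{\P^n\setminus(W\cup H)})$ and then peels off the coordinate hyperplanes one at a time, taking $W=D,\ D\cup H_0,\ D\cup H_0\cup H_1,\dotsc$; at step $i$ this tacitly requires $H_i$ to behave generically with respect to $D\cup H_0\cup\dotsb\cup H_{i-1}$, which holds because the coordinate arrangement is simple normal crossings and $D$ is a general translate, though the paper leaves this unsaid. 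Your flattened inclusion-exclusion over all coordinate subspaces $H_S$ trades that step-wise transversality for statements involving only the individual pairs $(D,H_S)$, each projectively equivalent to a pair (hypersurface, general codimension-$|S|$ subspace); this makes the genericity bookkeeping more transparent, at the cost of the (routine) binomial collapse at the end. One spot to tighten: the step ``$\csm(\one_{L\setminus g(D)})=\csm(\one_{L\setminus D})$ since $g(D)$ is again a general translate'' silently uses that $D'\mapsto\csm(\one_{L\setminus D'})$ is constant on a dense open set of translates — true, but an extra fact requiring justification (e.g.\ generic constancy, in families, of Euler characteristics of general linear sections). It is cleaner to transport in the opposite direction: writing $D=\sigma(V)$ with $\sigma\in\PGL_{n+1}(\C)$ general, triviality of the $\PGL_{n+1}(\C)$-action on $A_*\P^n$ and functoriality give $\csm(\one_{H_S\setminus D})=\csm(\one_{\sigma^{-1}(H_S)\setminus V})$ and $\csm(\one_{\P^n\setminus D})=\csm(\one_{\P^n\setminus V})$; since $\sigma^{-1}(H_S)$ is a general codimension-$|S|$ subspace relative to the fixed $V$, your iterated section formula along a general flag applies verbatim, with no comparison between different translates needed.
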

\begin{proof}
Given a hyperplane $H\subset\P^n$ in general position with respect to $D$, 
it follows from \cite{Ohm03} (over the complex numbers) or \cite[Proposition~2.6]{Alu13} (over any algebraically closed field in characteristic zero) that 
\(
\csm(\one_{D\cap H})=h(1+h)^{-1}\csm(\one_{D})  \in A_*\P^n.
\)
From there, a straightforward 
manipulation via inclusion-exclusion yields
\[
\csm(\one_{\P^n\setminus D})  = (1+h)\cdot \csm(\one_{\P^n\setminus(D\cup H)}) 	\qquad \in A_*\P^n.
\]
Now the conclusion follows by a repeated application of that to the $n+1$ coordinate hyperplanes.
\end{proof}

As an interesting consequence, we see that 
for hypersurfaces in general position, the
multidegrees of the toric polar map and the gradient map can be obtained from each other.

\begin{proposition}
\label{prop-toric-grad}
Let $V\subset \P^n$ be any hypersurface and let  $D=V(f)$ be a general translate of $V$.
Let $d_i$ and $g_i$ $(i=0,\dotsc,n)$ be the multidegrees of the toric polar map $T_f$ and of the gradient map $\grad f$, 
respectively. Then
\begin{equation*}
\label{eq-toric-grad-1}
\qquad\qquad
d_r = \sum_{j=0}^r \binom{r}{j}g_j
\qquad and \qquad 
g_r = {\sum_{j=0}^r (-1)^j\binom{r}{j}d_j},
\quad \quad \text{for } r=0,\dotsc,n.
\end{equation*}
\end{proposition}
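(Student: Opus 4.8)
The plan is to obtain both multidegree sequences from a single geometric source and to read off the binomial relation from the fact that the forms defining $T_f$ have degree exactly one more than those defining $\grad f$. Let $h$ and $k$ denote the pullbacks of the hyperplane class from the two factors of $\P^n\times\P^n$, so that $d_i=\int_{\Gamma_{T_f}}h^{n-i}k^i$ and $g_i=\int_{\Gamma_{\grad f}}h^{n-i}k^i$, where $\Gamma_{T_f}$ and $\Gamma_{\grad f}$ are the respective graphs.

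The conceptual heart is the coordinatewise (Hadamard) product map
\[
\mu\colon\P^n\times\P^n\dashrightarrow\P^n\times\P^n,\qquad (x,w)\mapsto(x,(x_0w_0:\cdots:x_nw_n)).
\]
Since $T_f(x)$ is the coordinatewise product of $x$ with $\grad f(x)$, the map $\mu$ carries $\Gamma_{\grad f}$ birationally onto $\Gamma_{T_f}$, and wherever it is a morphism one has $\mu^*h=h$ and $\mu^*k=h+k$, the latter because the second component of $\mu$ is bilinear of bidegree $(1,1)$. Granting for a moment that the indeterminacy of $\mu|_{\Gamma_{\grad f}}$ contributes nothing to the intersection numbers, the projection formula gives
\[
d_i=\int_{\Gamma_{T_f}}h^{n-i}k^i=\int_{\Gamma_{\grad f}}h^{n-i}(h+k)^i=\sum_{j=0}^i\binom{i}{j}\int_{\Gamma_{\grad f}}h^{n-j}k^j=\sum_{j=0}^i\binom{i}{j}g_j,
\]
which is the first asserted identity; the companion identity is its binomial inversion, $g_r=\sum_{j=0}^r(-1)^{r-j}\binom{r}{j}d_j$, obtained by inverting the unipotent lower-triangular system.

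The step that requires real work, and where the general position hypothesis enters, is the claim that the indeterminacy of $\mu$ along $\Gamma_{\grad f}$ is harmless. To make this robust I would instead run the computation through the two blow-ups directly. Resolving each map by blowing up $\P^n$ along its base scheme $B$ (with exceptional divisor $E$ and $\pi$ the projection), the pullback of the target hyperplane is $e\,\pi^*h-E$ for forms of degree $e$, so the projective degrees take the shape
\[
d_j=\sum_{m}\binom{j}{m}k^{\,j-m}(-1)^m\sigma_m(\toricscheme(D)),\qquad g_j=\sum_{m}\binom{j}{m}(k-1)^{\,j-m}(-1)^m\sigma_m(\sing(D)),
\]
where $k=\deg f$, the defining forms $x_if_{x_i}$ and $f_{x_i}$ have degrees $k$ and $k-1$, and $\sigma_m(B)=\int(\pi^*h)^{n-m}E^m$ depends only on the Segre class $s(B,\P^n)$. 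The identity $\sum_{l}\binom{r-m}{l}(k-1)^l=k^{\,r-m}$ is exactly what turns the $g$-formula into the $d$-formula, provided the two base schemes contribute the same numbers $\sigma_m$.

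Thus the crux reduces to showing that, for $D$ in general position, $s(\toricscheme(D),\P^n)=s(\sing(D),\P^n)$. I would deduce this by comparing two descriptions of the same class: the intrinsic form of Theorem~\ref{thm-A} writes $\csm(\one_{\P^n\setminus(D\cup\Hcal)})$ through $s(\toricscheme(D),\P^n)$, while Aluffi's formula \cite{Alu03} (quoted in the introduction) writes $\csm(\one_{\P^n\setminus(D\cup H)})$ through $s(\sing(D),\P^n)$, with $H$ a general hyperplane. Lemma~\ref{lemma-union-general-section} together with the single-hyperplane relation in its proof yields $\csm(\one_{\P^n\setminus(D\cup H)})=(1+h)^n\,\csm(\one_{\P^n\setminus(D\cup\Hcal)})$ in general position; matching the two expressions and cancelling the invertible factors $(1+h)^n$, $c(\Ocal(D))^{-1}$ together with the operations $^\vee$ and $\otimes\,\Ocal(D)$ forces the two Segre classes to agree. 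With this identification the binomial bookkeeping above finishes the proof, and I expect this Segre-class equality to be the main obstacle.
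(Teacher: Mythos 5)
Your rigorous argument is correct, and it consumes exactly the same inputs as the paper's proof --- Theorem~\ref{thm-A}, Aluffi's formula from \cite[Theorem~2.1]{Alu03}, and Lemma~\ref{lemma-union-general-section} (the only place where general position enters) --- but packages them differently. The paper sets $p(h)=\sum_i(-1)^id_ih^i$ and $q(h)=\sum_i(-1)^ig_ih^i$, obtains $\csm(\one_{\P^n\setminus D})=(1+h)^{n+1}p(h)$ from Theorem~\ref{thm-A} with Lemma~\ref{lemma-union-general-section}, and $\csm(\one_{\P^n\setminus D})=(1+h)^{n}\,q\bigl(h/(1+h)\bigr)$ from Aluffi; equating and expanding $(1+h)\,p(h)=q\bigl(h/(1+h)\bigr)$ gives both identities with no blow-ups and no Segre classes. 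You instead compare the intrinsic (Segre-class) avatars of the same two formulas, extract the intermediate identity $s(\toricscheme(D),\P^n)=s(\sing(D),\P^n)$ (to be read in $A_*\P^n$ after pushforward, since the two classes natively live in Chow groups of different schemes), and then reconstitute the multidegrees from the Segre numbers through the blow-up formulas. This is valid: the cancellation is legitimate because $(1+h)^n$, $c(\Ocal(D))^{-1}$, $^\vee$ and $\otimes\,\Ocal(D)$ are all invertible operations on $A_*\P^n$, and your binomial bookkeeping checks out. Since the blow-up dictionary between multidegrees and Segre numbers is precisely how the two ``equivalent'' formulations of Theorem~\ref{conj-A} (and of Aluffi's theorem) are translated into one another, your route amounts to the paper's comparison conjugated by that dictionary: longer, but it isolates a clean geometric byproduct the paper never states, namely that a general translate satisfies $s(\toricscheme(D),\P^n)=s(\sing(D),\P^n)$. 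Your instinct to demote the Hadamard map $\mu$ to a heuristic is also sound: its indeterminacy on $\Gamma_{\grad f}$ is genuinely not harmless without general position (for $x^2-yz$ one has $d_2=0$ while $\sum_j\binom{2}{j}g_j=4$), so no purely formal projection-formula argument can work.

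One point deserves emphasis: your inversion $g_r=\sum_{j}(-1)^{r-j}\binom{r}{j}d_j$ is the correct companion of $d_r=\sum_j\binom{r}{j}g_j$, and it differs by a factor $(-1)^r$ from the identity $g_r=\sum_j(-1)^{j}\binom{r}{j}d_j$ displayed in the statement. The displayed version is a sign slip: for a general conic in $\P^2$ one has $g_j\equiv 1$ and $d_j=2^j$, and $\sum_j(-1)^j\binom{r}{j}2^j=(-1)^r$, which is wrong for $r$ odd, whereas your formula returns $1$. Indeed the paper's own generating-function computation yields $(-1)^rg_r=\sum_j(-1)^j\binom{r}{j}d_j$, i.e., your form. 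So your proposal proves the statement as surely intended and, in passing, corrects its second display.
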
	
\begin{proof}
These two sets of equalities are determined by each other. Let us prove the ones on the left hand side.
Set
$p(h)=\sum (-1)^id_ih^i$ 
and 
$q(h)=\sum (-1)^ig_ih^i$. 
Then we have
\[
\csm(\one_{\P^n\setminus D})= (1+h)^n \cdot q\Big(\frac{h}{1+h}\Big),  
\qquad 
\csm(\one_{\P^n\setminus D})=(1+h)^{n+1}\cdot p(h)
\qquad \in A_*\P^n
\]
where the equation on the left follows from \cite[Theorem~2.1]{Alu03} and
the one on the right follows from
Theorem~\ref{thm-A} and Lemma~\ref{lemma-union-general-section}.
Therefore, 
\begin{equation}
\label{eq-toric-grad-2}
(1+h)\cdot p(h) = q\Big(\frac{h}{1+h}\Big) 
\qquad \in A_*\P^n.
\end{equation}
Now the result follows by expanding and comparing both sides.
\end{proof}

As a consequence, we see that given \emph{any} hypersurface, the multidegrees of the toric polar map
of a general translate are the same.
The resemblance with situation for the \emph{generic Euclidean distance degree} (\cite{AH18}) is remarkable: there, the isotropic quadric plays the same role as the coordinate hyperplanes here.
The common thread is that, in both cases, the relevant invariants are computed by counting critical points.
Proposition~\ref{prop-toric-grad}
can be rephrased by saying that, for a given hypersurface, the `generic toric polar map multidegrees' 
can be computed directly from the multidegrees of the associated gradient map.

\begin{remark}
\label{rmk-general-toric-degrees}
The toric polar map of a
hypersurface in general position exhibits a tendency to have large
multidegrees. For example, if $V(f)\subset\P^n$ is in general position and $f$ has a nonvanishing hessian (hence $\grad f$ is dominant), then by Proposition~\ref{prop-toric-grad} we have for each $r=0,\dotsc,n$,
\[
d_r =
\sum_{j=0}^r \binom{r}{j} g_j \ \geq \ 
\sum_{j=0}^r \binom{r}{j} = 2^r
\]
where the inequality is justified by the fact that the multidegrees form a sequence with no internal zeros.
In particular, in this case one has $\deg T_f\geq 2^n$.
So, in order to yield a toric polar map geometrically interesting,  say nondominant or birational, the hypersurface necessarily must be in a special position. 
\terminou
\end{remark}

We close our discussion on the general position case with a simple formula for
hypersurfaces with isolated singularities.

\begin{corollary}
\label{cor-isolated-sings-genl-pos}
Let $V\subset \P^n$ be a hypersurface of 
degree $k$ with only isolated singularities. Let $D=V(f)$ be a general translate of $V$.
Then:
\begin{equation*}
\deg T_f = k^n - \sum_{p\in D} \mu_p(D)
\end{equation*}
where $\mu_p(D)$ is the Milnor number at $p$.
\end{corollary}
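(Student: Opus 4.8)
The plan is to deduce the formula from Proposition~\ref{prop-toric-grad}, which applies precisely because $D$ is a general translate. Writing $g_0,\dots,g_n$ for the multidegrees of $\grad f$, that proposition gives $\deg T_f = d_n = \sum_{j=0}^n \binom{n}{j} g_j$, so the entire problem reduces to computing the gradient multidegrees of a degree-$k$ hypersurface with isolated singularities. I would organize the computation around the single fact that the base locus of $\grad f$ equals the singular scheme $\sing(D)$, which here is $0$-dimensional.

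First I would dispose of the lower multidegrees. Resolving the base locus by a blow-up $\beta\colon X\to\P^n$, the gradient map becomes a morphism with $\phi^*\Ocal(1)=\beta^*\Ocal(k-1)\otimes\Ocal(-E)$, where $E$ is supported over the finitely many singular points, so that $g_j=\int_X\big((k-1)\beta^*h-E\big)^j\cdot(\beta^*h)^{n-j}$. Since $E$ lies over points, any monomial containing both $E$ and a positive power of $\beta^*h$ pushes forward to zero in $A_*\P^n$; hence for $j<n$ every term involving $E$ drops out and $g_j=(k-1)^j$, exactly the smooth value. Only the top degree feels the singularities.

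The heart of the matter is therefore the top multidegree $g_n=\deg\grad f$, the classical polar degree, for which I would use $g_n=(k-1)^n-\sum_p\mu_p(D)$; this is the step I expect to be the main obstacle. The reason is that the naive ``$(k-1)^n$ minus the length of the base scheme'' count is \emph{false}: $\sing(D)$ need not be reduced or a complete intersection, so the Segre-class correction is not simply a colength. The clean route is topological: by the Dimca--Papadima formula $\deg\grad f=(-1)^n\euler(\P^n\setminus(D\cup H))$ \cite{DP03} for a general hyperplane $H$, together with the fact that smoothing an isolated singularity changes $\euler(D)$ by $(-1)^{n-1}\mu_p$ (the Milnor fibre being a wedge of $\mu_p$ spheres of real dimension $2(n-1)$), an inclusion--exclusion over $D$, $H$ and the smooth section $D\cap H$ yields $(k-1)^n-\sum_p\mu_p(D)$.

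Finally, substituting into $\deg T_f=\sum_{j=0}^n\binom{n}{j}g_j$ gives, by the binomial theorem,
\[
\deg T_f=\sum_{j=0}^{n}\binom{n}{j}(k-1)^j-\sum_p\mu_p(D)=k^n-\sum_p\mu_p(D).
\]
As an alternative that bypasses the gradient map entirely, one could argue directly from Theorem~\ref{thm-top-degree} in the form (\ref{eq-top-degree-alternate}): since a general translate places all singular points off $\Hcal$, comparing $D$ with a smooth translate $D_0$ of the same degree gives $\euler(D\setminus\Hcal)-\euler(D_0\setminus\Hcal)=\euler(D)-\euler(D_0)=(-1)^n\sum_p\mu_p(D)$, while $\deg T_{D_0}=k^n$ (all gradient multidegrees of a smooth translate being $(k-1)^j$); combining the two reproduces the same formula.
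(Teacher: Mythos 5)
Your proposal is correct and follows essentially the same route as the paper's proof: reduce to the gradient multidegrees via Proposition~\ref{prop-toric-grad}, note that $g_j=(k-1)^j$ for $j<n$ because the base locus of $\grad f$ is the finite set $\sing(D)$, invoke the polar degree formula $\deg\grad f=(k-1)^n-\sum_{p}\mu_p(D)$, and conclude by the binomial theorem. The only differences are in how the ingredients are justified --- the paper cites \cite{Huh12} or \cite{FM12} for the polar degree formula where you re-derive it from \cite{DP03}, and your blow-up computation of the lower $g_j$ replaces the paper's one-line restriction argument --- while your closing alternative via Theorem~\ref{thm-top-degree} is precisely the remark the paper makes immediately after the corollary.
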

\begin{proof}
Let $g_0,\dotsc,g_n$ be the multidegrees of the gradient map $\grad f$. 
Since $D$ has isolated singularities we have   (see e.g. \cite[Example~12]{Huh12} or 
\cite[Proposition~2.3]{FM12}): 
\[
\deg(\grad f)=(k-1)^n - \sum_{p\in D} \mu_p(D).
\]
Since the base locus of the gradient map is precisely $\sing(D)$, its restriction to any general subspace $\P^{n-j}$ is a morphism, defined by polynomials of degree $k-1$; hence we get $g_j=(k-1)^j$ for $j=1,\dotsc,n-1$. Finally, $D$ is in general position, so by Proposition~\ref{prop-toric-grad} we get
\[
\deg T_f = 1+\binom{n}{1}(k-1)+\dotsb+\binom{n}{n-1}(k-1)^{n-1} + (k-1)^n - \sum_{p\in D} \mu_p(D) 
\]
yielding the formula in the statement.
\end{proof}

We point out that Corollary~\ref{cor-isolated-sings-genl-pos} holds if we just assume that $D\cup\Hcal$ 
has normal crossings along $D\cap \Hcal$, as one could use only Theorem~\ref{thm-top-degree} to prove it.
Nevertheless, some sort of transversality hypothesis with the coordinate hyperplanes is needed: The toric polar map associated to the smooth plane conic $x^2-yz$ is not dominant, so the corollary does not hold in that case.
See Proposition~\ref{prop-noplano} for a 
general formula for plane curves.

\subsection{Plane curves}
\label{subsection-plane}
~
\smallskip

For a plane curve, there is a geometric way to compute the degree of its toric polar map 
from invariants of its singular points and its incidence with the three coordinate lines.
Our formulas are a direct consequence of Theorem~\ref{thm-top-degree}.

Let $f\in\C[x_0,x_1,x_2]$ be a homogeneous polynomial and let $C=V(f)\subset\P^2$ be the corresponding plane curve.
Since we are interested in the degree of its toric polar map, by Corollary~\ref{cor-reduced} we may assume $C$ reduced.
Let $H_i=V(x_i)$ be the coordinate lines and
$p_i=H_j\cap H_k$ be the fundamental points, for 
$\{i,j,k\}=\{0,1,2\}$.
Set:
\begin{itemize}
	\item $\mu_p(C):$ the Milnor number of $C$ at $p$.
	\item $\mathfrak{i} := \# (C\cap \{p_0,p_1,p_2\})$:
	the contribution from 
	the incidence of $C$ with the three fundamental points.
	\item $\mathfrak{t}$: the contribution from the tangency points between $C$ and the coordinate lines,
	\[
	\mathfrak{t} := \sum_{j=0}^2 \sum_{p\in C\cap H_j} (I_p(C,H_{j})-1).
	\]
\end{itemize}

We start with a refinement of our previous result on isolated singularities.
The upshot is that the geometric quantities $\mathfrak{i}$,  $\mathfrak{t}$ give the correction term for the formula in 
Corollary~\ref{cor-isolated-sings-genl-pos} when the curve is not in general position.
\begin{proposition}
	\label{prop-noplano}
	Keep the above notation.
	Assume $C$ is reduced of degree $k\geq 1$, with $\gcd(f,x_0x_1x_2)=1$. Then:
	\begin{equation*}
		\label{grauplano}
		\deg T_f = k^2 - \sum_{p\in C} \mu_p(C) - \mathfrak{i} - \mathfrak{t}.
	\end{equation*}
\end{proposition}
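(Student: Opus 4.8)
The plan is to apply Theorem~\ref{thm-top-degree} in the form of equation~(\ref{eq-top-degree-alternate}), which for $n=2$ reads $\deg T_f = -\euler(C\setminus\Hcal)$. Since $\gcd(f,x_0x_1x_2)=1$, no irreducible component of $C$ is a coordinate line, so $C\cap\Hcal$ is a finite set and its topological Euler characteristic is just its cardinality. Splitting the complement, one gets
\[
\deg T_f = -\euler(C\setminus\Hcal) = -\euler(C) + \#(C\cap\Hcal),
\]
and the proof reduces to two independent computations: the Euler characteristic of the (possibly reducible) curve $C$, and the number of distinct points of $C\cap\Hcal$.

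For the first, I would pass to the normalization $\nu\colon\wt C\to C$. The conductor (delta-invariant) exact sequence $0\to\Ocal_C\to\nu_*\Ocal_{\wt C}\to\mathcal S\to 0$, with $\mathcal S$ supported on $\sing(C)$ of length $\delta_p$ at each $p$, gives $\chi(\Ocal_C)=\chi(\Ocal_{\wt C})-\sum_p\delta_p$. Since $C$ has arithmetic genus $\binom{k-1}{2}$ and $\euler(\wt C)=2\chi(\Ocal_{\wt C})$ for the smooth curve $\wt C$, this yields $\euler(\wt C)=2-(k-1)(k-2)+2\sum_p\delta_p$; note that the number of irreducible components drops out at this step. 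On the topological side, gluing the $r_p=\#\nu^{-1}(p)$ branches lying over each singular point gives $\euler(C)=\euler(\wt C)-\sum_p(r_p-1)$, and Milnor's formula $\mu_p=2\delta_p-r_p+1$ collapses these into
\[
\euler(C) = 3k - k^2 + \sum_{p\in C}\mu_p(C).
\]

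For the second computation, by B\'ezout each coordinate line meets $C$ in $k$ points counted with multiplicity, so the number of \emph{distinct} points of $C\cap H_j$ equals $k-\sum_{p\in C\cap H_j}(I_p(C,H_j)-1)$. Summing over $j=0,1,2$ gives $3k-\mathfrak t$; but the three fundamental points each lie on two coordinate lines, while the three lines have no common point, so by inclusion--exclusion every $p_i\in C$ has been counted twice. Subtracting this overcount yields $\#(C\cap\Hcal)=3k-\mathfrak t-\mathfrak i$. Substituting the two computations into the displayed expression for $\deg T_f$ and simplifying gives $\deg T_f = k^2-\sum_{p\in C}\mu_p(C)-\mathfrak i-\mathfrak t$, as claimed.

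The main obstacle is the Euler characteristic formula for $C$ in the reducible case, where $\sing(C)$ comprises both the singularities of individual components and the intersection points among distinct components. The subtle point is to verify that the genus/delta bookkeeping is insensitive to the number of components; the cancellation of the component count in the passage to $\euler(\wt C)$, made transparent by the identity $\euler(\wt C)=2\chi(\Ocal_{\wt C})$, is exactly what makes the clean uniform formula hold. I would therefore run the coherent-$\chi$ argument globally on $C$ rather than component by component, and invoke Milnor's formula $\mu_p=2\delta_p-r_p+1$ (valid for reducible singularities as well) so that both individual-component singularities and component crossings are treated on equal footing.
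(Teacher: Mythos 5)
Your proposal is correct and follows essentially the same route as the paper: both reduce via Theorem~\ref{thm-top-degree} to the identity $\deg T_f=\#(C\cap\Hcal)-\euler(C)$, and both compute $\#(C\cap\Hcal)=3k-\mathfrak{i}-\mathfrak{t}$ by the same B\'ezout/inclusion--exclusion count. The only difference is that where the paper simply cites the formula $\euler(C)=\sum_{p\in C}\mu_p(C)+3k-k^2$ (Fulton, Example~14.1.5), you rederive it via the normalization, delta invariants and Milnor's formula $\mu_p=2\delta_p-r_p+1$ --- a correct derivation that handles the reducible case properly.
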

\begin{proof}
	By Theorem~\ref{thm-top-degree} we have $\deg T_f= -\euler(C\setminus \Hcal)$ and since 
	$C\cap \Hcal$ is finite we get
	\begin{equation}
		\label{eq-plane1}
		\deg T_f = \#(C\cap \Hcal) - \euler(C),
	\end{equation}
	On the one hand, note that $\# (C\cap\Hcal)$ is given simply by $\sum_{j=0}^{2}\#(C\cap H_{j}) - \mathfrak{i}$.
	On the other hand, by B\'{e}zout's theorem,
	\[
	k = \sum_{p\in C} I_p(C,H_{j}) =  \#(C\cap H_{j}) + \sum_{p\in C\cap H_{j}} (I_p(C,H_{j})-1) 
	\]
	for each $j=0,1,2$ 
	and hence
	\[
	\#(C\cap \Hcal) = 3k - \mathfrak{i} - \mathfrak{t}. 
	\]
	Recall that for plane curves with isolated singularities we have (e.g. \cite[Example~14.1.5]{Ful84})
	\begin{equation*}
		\label{eq01}
		\euler(C) = \sum \mu_p(C)+3k-k^{2}.
	\end{equation*}
	By combining the last two equations with \eqref{eq-plane1} we obtain the result.
\end{proof}

\begin{example}
\label{example-Kapranov}
By computing the discriminant of $t^4+t^3+x_1t+x_2$ with respect to the variable $t$ and homogenizing, we obtain
a cuspidal plane cubic $C=V(f)$, where
\[
f=4x_1^3-x_0x_1^2-18x_0x_1x_2+27x_0x_2^2+4x_0^2x_2.
\]
A direct computation yields
\[
\sum \mu_p(C)=2, \quad \mathfrak{i} = 2, \quad\text{and}\quad \mathfrak{t}=3
\]
and hence by Proposition~\ref{prop-noplano} we get $\deg T_{f}=2$. This is a quite interesting example, because from 
\cite[Example~0.2]{Kap91} we already know that the corresponding logarithmic Gauss map 
$C\setminus \Hcal \dashrightarrow \P^1$,
which is the restriction of $T_f$ to the curve $C$ in the torus,
is birational.
\terminou
\end{example}	

To finish our discussion on the plane, we prove that the degree of the toric polar map of a given curve can be computed directly from the ones of its components and their intersections outside the three coordinate lines.

\begin{corollary}
	\label{cor-reducible-curves}
	Let $f,g\in \C[x_0,x_1,x_2]$ be coprime,
	homogeneous polynomials. Then:
	\begin{equation*}
		\label{grau-prod}
		\deg T_{f\cdot g}= \deg T_{f} + \deg T_{g} + \#((V(f)\cap V(g))\setminus \Hcal).
	\end{equation*}
\end{corollary}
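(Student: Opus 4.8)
The plan is to reduce the claim to the additivity of the topological Euler characteristic, using the alternate form of Theorem~\ref{thm-top-degree} recorded in \eqref{eq-top-degree-alternate}. Specialized to the plane ($n=2$), that formula reads $\deg T_h=-\euler(V(h)\setminus\Hcal)$ for every homogeneous polynomial $h$. Writing $C=V(f)$ and $C'=V(g)$, the hypersurface attached to the product is the set-theoretic union $V(fg)=C\cup C'$, so it suffices to compute $\euler\big((C\cup C')\setminus\Hcal\big)$ and compare it with $\euler(C\setminus\Hcal)$ and $\euler(C'\setminus\Hcal)$.

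First I would apply inclusion--exclusion for the topological Euler characteristic---the numerical shadow of property (3) of the CSM class, obtained by taking degrees---to the two constructible sets $A=C\setminus\Hcal$ and $B=C'\setminus\Hcal$. Since $A\cup B=(C\cup C')\setminus\Hcal$ and $A\cap B=(C\cap C')\setminus\Hcal$, this gives
\[
\euler\big((C\cup C')\setminus\Hcal\big)=\euler(C\setminus\Hcal)+\euler(C'\setminus\Hcal)-\euler\big((C\cap C')\setminus\Hcal\big).
\]

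The remaining point is to identify the last term. Because $f$ and $g$ are coprime, the intersection $C\cap C'$ is a zero-dimensional scheme, hence a finite set of points; removing $\Hcal$ leaves a finite set, whose Euler characteristic is simply its cardinality $\#\big((C\cap C')\setminus\Hcal\big)$. Multiplying the displayed identity by $-1$ and substituting the three degrees then yields exactly $\deg T_{fg}=\deg T_f+\deg T_g+\#\big((C\cap C')\setminus\Hcal\big)$.

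I expect no serious obstacle: the statement is a clean consequence of the additivity of $\euler$ together with the finiteness of $C\cap C'$. The only thing to keep straight is that everything takes place at the level of supports---$V(fg)$ is the set-theoretic union $C\cup C'$---so that no hypothesis on reducedness or on $\gcd(fg,x_0x_1x_2)$ is required, in keeping with Corollary~\ref{cor-reduced}.
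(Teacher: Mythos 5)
Your proof is correct and follows exactly the paper's argument: inclusion--exclusion for the Euler characteristic applied to $C\setminus\Hcal$ and $C'\setminus\Hcal$, combined with the alternate form \eqref{eq-top-degree-alternate} of Theorem~\ref{thm-top-degree}, with the coprimality hypothesis ensuring the intersection term is a finite set counted by cardinality. The extra observations you make (the sign bookkeeping in the plane and the irrelevance of reducedness) are accurate but only spell out details the paper leaves implicit.
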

\begin{proof} Set $C=V(f)$ and $D=V(g)$. By inclusion-exclusion,
	\[
	\euler((C\cup D)\setminus \Hcal) = 
	\euler(C\setminus \Hcal) + \euler(D\setminus \Hcal) - \euler((C\cap D)\setminus \Hcal)
	\]
	hence the result follows from
	Theorem~\ref{thm-top-degree}, see Equation~\eqref{eq-top-degree-alternate}.
\end{proof}

Dolgachev in \cite{Dol00} has classified all plane curves whose gradient map is birational.
Corollary~\ref{cor-reducible-curves} suggests that one may expect to classify curves with birational toric polar map   by reducing the problem to irreducible curves, in the same vein as has been done by the first two authors for the gradient map
 in \cite{FM12}. A problem here is what do we mean by a \emph{classification}: the degree of the toric polar map is not invariant by a projective change of coordinates. On the other hand, this degree is invariant by \emph{invertible monomial transformations} (see Proposition~\ref{prop-monomiais-invertiveis} below), which may be seen as a change of coordinates of the underlying algebraic torus. 
The classification of plane curves whose toric polar map is birational, by von Bothmer, Ranestad and Sottile \cite{BRS10}, is done in that sense.


\subsection{Birational toric polar maps}

\label{subsection-birational}
~
\smallskip

We close the paper by presenting a couple of cheap ways to construct families of hypersurfaces with corresponding  birational toric polar  transformations.
Such hypersurfaces are important in projective geometry and geometric modeling.

\subsubsection{Pyramids}
Let $f \in \C[x_0,\dotsc,x_n]$ be a homogeneous polynomial of degree $k$ and let $m$ be a monomial of degree $k-1$ in the same variables.
Define
\[
\newf:=f + m\cdot x_{n+1} \qquad \in \C[x_0,\dotsc,x_{n+1}].
\]
Note that $\newf$ also has degree $k$ and,
if $\gcd(f,m)=1$, then $\newf$ is irreducible.

\begin{proposition}
\label{prop:pyramid}
With notations  as above, we have $\deg T_{\newf}=\deg T_f$.
\end{proposition}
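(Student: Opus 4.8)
The plan is to deduce the statement directly from Theorem~\ref{thm-top-degree}, reducing it to a short comparison of topological Euler characteristics, one in $\P^n$ and one in $\P^{n+1}$. Write $D=V(f)\subset\P^n$, $\Hcal=V(x_0\cdots x_n)$, and for the pyramid set $\wt{D}=V(\newf)\subset\P^{n+1}$ together with $\wt{\Hcal}=V(x_0\cdots x_{n+1})$. Applying Theorem~\ref{thm-top-degree} in dimension $n$ and in dimension $n+1$ respectively gives
\[
\deg T_f=(-1)^n\euler(\P^n\setminus(D\cup\Hcal))
\quad\text{and}\quad
\deg T_{\newf}=(-1)^{n+1}\euler(\P^{n+1}\setminus(\wt{D}\cup\wt{\Hcal})).
\]
Thus it suffices to establish the single sign-flip identity
\[
\euler(\P^{n+1}\setminus(\wt{D}\cup\wt{\Hcal}))=-\,\euler(\P^n\setminus(D\cup\Hcal)).
\]

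Next I would pass to the algebraic torus. The standard complement $\P^{n+1}\setminus\wt{\Hcal}$ is the torus $T\cong(\C^*)^{n+1}$, whose Euler characteristic vanishes; hence, by the additivity of $\euler$ over the decomposition of $T$ into the locally closed pieces $T\setminus\wt{D}$ and $T\cap\wt{D}$,
\[
\euler(\P^{n+1}\setminus(\wt{D}\cup\wt{\Hcal}))=\euler(T\setminus\wt{D})=\euler(T)-\euler(T\cap\wt{D})=-\,\euler(T\cap\wt{D}).
\]
So everything reduces to identifying the torus slice $T\cap\wt{D}$.

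Finally, I would exhibit an isomorphism $T\cap\wt{D}\xrightarrow{\sim}\P^n\setminus(D\cup\Hcal)$. In the affine chart $x_0=1$, let $\hf$ and $\hat{m}$ denote the dehomogenizations of $f$ and $m$; on the torus $\hat{m}$ is an \emph{invertible} monomial, so the equation $\newf=\hf+\hat{m}\,x_{n+1}=0$ solves uniquely and regularly for $x_{n+1}=-\hf/\hat{m}$, and this value lies in $\C^*$ precisely when $\hf\neq 0$. Consequently the linear projection $(x_1,\dots,x_{n+1})\mapsto(x_1,\dots,x_n)$ — that is, the projection from $p=(0:\cdots:0:1)$ — restricts to an isomorphism from $T\cap\wt{D}$ onto $\{(x_1,\dots,x_n)\in(\C^*)^n:\hf\neq 0\}=\P^n\setminus(D\cup\Hcal)$, with regular inverse $b\mapsto(b,-\hf(b)/\hat{m}(b))$. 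Therefore $\euler(T\cap\wt{D})=\euler(\P^n\setminus(D\cup\Hcal))$, and substituting back yields
\[
\deg T_{\newf}=(-1)^{n+1}\cdot(-1)\cdot\euler(\P^n\setminus(D\cup\Hcal))=(-1)^n\euler(\P^n\setminus(D\cup\Hcal))=\deg T_f.
\]
I do not expect a genuine obstacle here: the argument is a short Euler-characteristic bookkeeping, and the only points requiring care are the sign match between the dimensions $n$ and $n+1$ and the verification that $x_{n+1}$ is recovered as a regular function on $T$ — which is exactly where the hypothesis that $m$ is a monomial (hence $\hat{m}$ is invertible on the torus) enters. One should also invoke the standard motivic additivity of $\euler$ for complex algebraic varieties in order to split off the torus.
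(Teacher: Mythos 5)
Your proof is correct, and it takes a genuinely different route from the paper's. The paper argues directly on linear systems: writing $m=x_0^{a_0}\cdots x_n^{a_n}$, the system $\gera{x_0f_{x_0}+a_0mx_{n+1},\dotsc,x_nf_{x_n}+a_nmx_{n+1},mx_{n+1}}$ defining $T_{\newf}$ agrees, after a linear change of coordinates, with $\gera{x_0f_{x_0},\dotsc,x_nf_{x_n},mx_{n+1}}$; this gives a commutative square intertwining $T_{\newf}$ and $T_f$ through the projection $\pi$ from $(0:\cdots:0:1)$, and the proof concludes via a Jacobian argument ($T_{\newf}$ is dominant if and only if $T_f$ is) plus the check that $\pi$ restricts to a bijection between general fibers. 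You instead quote Theorem~\ref{thm-top-degree} in dimensions $n$ and $n+1$, use $\euler((\C^*)^{n+1})=0$ and additivity to reduce to the isomorphism $\wt{D}\setminus\wt{\Hcal}\cong\P^n\setminus(D\cup\Hcal)$, and produce that isomorphism by solving $x_{n+1}=-\hf/\hat{m}$ on the torus --- the invertibility of $\hat{m}$ there is exactly where the hypothesis that $m$ is a monomial enters, playing the same role as the coordinate change does in the paper. Both arguments are complete and your sign bookkeeping is right. What each buys: your argument is shorter, handles the nondominant (degree-zero) case with no extra discussion since Theorem~\ref{thm-top-degree} already covers it, and makes transparent that only the torus part of $V(\newf)$ matters; the paper's argument is independent of Theorem~\ref{thm-top-degree} (hence more elementary and purely algebraic) and yields finer structural information, namely the explicit diagram relating $T_{\newf}$ to $T_f$ over the projection, which exposes the geometry of the pyramid construction rather than just its degree.
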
  

\begin{proof}
Let $m=x_0^{a_0}\dotsb x_n^{a_n}$ be a monomial of degree $k-1$.
The polar toric map $T_\newf$ is given by the
linear system
\[
\gera{x_0f_{x_0}+a_0mx_{n+1},\dotsc,x_nf_{x_n}+a_nmx_{n+1},mx_{n+1}}
\]
which coincides with the linear system
\[
\gera{x_0f_{x_0},\dotsc,x_nf_{x_n},mx_{n+1}},
\]
and hence, up to a linear change of coordinates, we have a commutative diagram
\[
\xymatrix { 
	\P^{n+1} \ar@{-->}[d]_\pi \ar@{-->}[r]^{T_\newf}  &   \P^{n+1} \ar@{-->}[d]^\pi \\
\P^n  \ar@{-->}[r] ^{T_f} & \P^n \\
}
\]
where $\pi$ is the projection centered at $(0:\cdots:0:1)$. 
By looking at the jacobian matrices, we see that $T_\newf$ is dominant if and only if $T_f$ is dominant.
 In that case, a simple computation shows that  for a general point $q\in \P^{n+1}$
the restriction $\pi\colon T_{\newf}^{-1}(q) \longrightarrow T_f^{-1}(\pi(q))$
is a bijection and so we are done. 
\end{proof}

As already mentioned, plane curves whose toric polar map is birational have been classified in \cite{BRS10}. Building on that work, now it is easy to produce families in higher dimension.

\begin{example}
\label{example-birational1}
Given
 $n\geq 2$ and $k\geq 1$, the families {of polynomials in $\C[x_0,\dotsc,x_n]$}:
\begin{enumerate}[{\rm(a)}]
	\item $x_1^2+x_1x_2+x_0(x_1+x_2+\dotsb+x_n)$
	\item $(x_0+x_1)^k+x_1^{k-1}x_2 + x_2^{k-1}x_3+ \dotsb + x_{n-1}^{k-1}x_n$
	\item $(x_0^2+x_1^2+x_2^2-2x_0x_1-2x_0x_2-2x_1x_2) + x_2x_3+\dotsb+x_{n-1}x_n$
\end{enumerate}
{define birational toric polar maps}. 
Moreover, these polynomials are irreducible, with the exception of the conic  in (a) when $n=2$.
\smallskip

Indeed, for $n=2$ the list is taken from \cite[Theorem~0.2]{BRS10};
for $n\geq 3$, apply  Proposition~\ref{prop:pyramid} repeatedly.
\terminou
\end{example}
\begin{example}
\label{example-birational2-Dolgachev}
Consider the following quadric on $\P^n$,
\[
q_n = x_1^2+x_0x_1+x_0x_2+\dotsb+x_0x_n.
\]
%
%
For $n=1$ the associated toric polar map is an isomorphism, so from Proposition~\ref{prop:pyramid} we conclude that $T_{q_n}$ is birational for $n\geq 2$
as well.

Let us analyze these maps more closely. A simple computation shows that the toric polar map $T_{q_n}$ is also given by the linear system
\[
\gera{x_1^2,x_0x_1,\dotsc,x_0x_n}
\]
which yields a \emph{Cremona quadro-quadric} transformation on $\P^n$:
The polynomials defining its inverse also have degree two.
For each $n\geq 2$, {this linear system consists of} quadrics containing the double $(n-2)$-dimensional plane 
$Q_{0}=V(x_0)\cap V(x_1^2) \subset\P^n$. The base locus of {$T_{q_n}$} consists of $Q_0$ together with an isolated point. For a detailed description of the beautiful geometry involved, we refer the reader to \cite[Lecture~3]{Dolg16}.
The multidegrees of these maps were already computed in loc. cit., namely
\[
1,2,\dotsc,2,1. 
\]
Hence, by Theorem~\ref{thm-A},
\[
\csm(\one_{\P^n\setminus(V(q_n)\cup\Hcal)}) = 1-2h + \dotsb + 2 (-h)^{n-1}+ (-h)^n.
\]
\vspace{-0.9cm}
\ \\
\phantom{u}
\terminou
\end{example}

\subsubsection{Invertible monomial transformations}
\label{subsection-invertible-monomial-transformations}

To each matrix  $A=(a_{ij})_{i,j=0,...,n}$ of non-negative integers we associate a map 
\[\C^{n+1}\to\C^{n+1},
\qquad
(x_{0},\dotsc,x_{n})\mapsto (x_{0}^{a_{00}}\cdots x_{n}^{a_{0n}},\dotsc, x_{0}^{a_{n0}}\cdots x_{n}^{a_{nn}}),
\]
so that the rows of $A$ are the exponents of the monomials.
This map descends to a rational map 
$\varphi_A\colon \P^n \dashrightarrow \P^n$ if and only if the monomials above are homogeneous of same degree $k$, that is, 
\[
\sum_{j=0}^na_{ij} = k, \quad \forall\, i=0,\dotsc, n.
\]
We assume that these monomials have no common factor.
We say that $\varphi_A$ is an \emph{invertible monomial transformation}
if it defines a birational map; and that happens if and only if $\abs{\det A} = k$ (see e.g. \cite{GP03} or \cite[Proposition~1]{DL}). Note that each rational map $\varphi_A$ induces an endomorphism of  $(\C^*)^n= \P^n\setminus \Hcal$
\[
\varphi_A(x_1,\dots, x_n) = (x_{1}^{b_{11}}\cdots x_{n}^{b_{1n}},\dotsc, x_{1}^{b_{n1}}\cdots x_{n}^{b_{nn}})
\]
where $b_{ij} = a_{ij} - a_{0j}$, so that  $\varphi_A$ is birational if and only if $(b_{ij})$ belongs to $\GL_n(\Z)$. Therefore, $\varphi_A$ is an invertible monomial transformation if and only if it induces an isomorphism of the algebraic group  $(\C^*)^n$. 

These transformations allow us to construct 
interesting examples. A basic property is the following.

\begin{proposition}
\label{prop-monomiais-invertiveis}
Let $\varphi_A$ be a monomial invertible
transformation of $\P^n$. Given a homogeneous polynomial $f\in \C[x_0,\dotsc,x_n]$, we have 
\[
\deg T_{\varphi_A^*(f)} =\deg T_f .
\]
In particular, for the polynomial 
\(
g_A=\varphi_A^*(x_0+\dotsb +x_n),
\)
which is the sum of the monomials defining $\varphi_A$,  the toric polar map $T_{g_A}\colon \P^n\dashrightarrow \P^n$
is birational.
\end{proposition}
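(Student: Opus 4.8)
The plan is to establish the equality of degrees by relating the toric polar map of a monomial pullback to the original via the monomial change of coordinates, exploiting the fact that $\varphi_A$ is an isomorphism of the torus $(\C^*)^n=\P^n\setminus\Hcal$. First I would invoke Theorem~\ref{thm-top-degree} in its alternate form \eqref{eq-top-degree-alternate}, which expresses $\deg T_h=(-1)^{n-1}\euler(V(h)\setminus\Hcal)$ purely as a topological invariant of the hypersurface intersected with the torus. This reduces the claim $\deg T_{\varphi_A^*(f)}=\deg T_f$ to showing that the two curves (hypersurfaces) $V(\varphi_A^*(f))\setminus\Hcal$ and $V(f)\setminus\Hcal$ have the same Euler characteristic inside $(\C^*)^n$.

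Next I would observe that $\varphi_A$ restricts to an automorphism of $(\C^*)^n$, since $\varphi_A$ is an invertible monomial transformation and hence $(b_{ij})\in\GL_n(\Z)$ as recorded in the paragraph preceding the statement. Under this automorphism, the zero locus of $\varphi_A^*(f)$ in the torus is precisely the preimage of the zero locus of $f$, so $\varphi_A$ carries $V(\varphi_A^*(f))\setminus\Hcal$ isomorphically onto $V(f)\setminus\Hcal$. Since the topological Euler characteristic is invariant under isomorphism (indeed under homeomorphism), the two Euler characteristics agree, and \eqref{eq-top-degree-alternate} delivers $\deg T_{\varphi_A^*(f)}=\deg T_f$. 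A small point to be careful about is that $\varphi_A^*(f)$ may fail to be reduced or may share a factor with $x_0\cdots x_n$; but Corollary~\ref{cor-reduced} tells us the degree of the toric polar map depends only on the reduced polynomial, and the standing reduction $\gcd(h,x_0\cdots x_n)=1$ can be arranged without changing $\deg T_h$ (as explained after the statement of Theorem~\ref{thm-top-degree}), so this causes no difficulty.

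For the final assertion, I would specialize to $f=x_0+\dotsb+x_n$, whose associated toric polar map $T_f$ is well known to be an isomorphism (it is the identity up to linear coordinates, since $x_if_{x_i}=x_i$), so $\deg T_f=1$. Applying the first part with $g_A=\varphi_A^*(x_0+\dotsb+x_n)$ then gives $\deg T_{g_A}=\deg T_f=1$, which means $T_{g_A}$ is birational. The only genuinely substantive step is confirming that $\varphi_A$ induces a \emph{homeomorphism} of the relevant strata of the torus compatibly with the hypersurfaces; everything else is a formal consequence of Theorem~\ref{thm-top-degree} and the torus-automorphism description already developed in \S\ref{subsection-invertible-monomial-transformations}. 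I expect the main obstacle to be purely bookkeeping: ensuring that the reducedness and coprimality hypotheses of Theorem~\ref{thm-top-degree} are legitimately in force for the pulled-back polynomial, which is handled by Corollary~\ref{cor-reduced}.
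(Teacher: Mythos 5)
Your argument is correct, and it is essentially the \emph{alternative} route that the paper's own proof mentions in a single sentence (``since $\varphi_A$ is an isomorphism of the algebraic torus, the invariance of the degree also follows from Theorem~\ref{thm-top-degree}''); what you have done is supply the details of that remark. The paper's primary argument is different and purely algebraic: it invokes the identity $T_{\varphi_A^*(f)} = A^t\circ T_f \circ \varphi_A$ of \cite[Lemma~2.5]{BRS10}, where $A^t$ is the transpose of $A$; since $\varphi_A$ and the monomial map attached to $A^t$ are birational, multiplicativity of the topological degree under composition gives $\deg T_{\varphi_A^*(f)}=\deg T_f$ at once. The trade-off is clear: the conjugation identity is sharper, relating the maps themselves rather than just their degrees, and it requires none of the machinery behind Theorem~\ref{thm-top-degree} (generic critical points, resolution of singularities, logarithmic Poincar\'e--Hopf); your route, on the other hand, is self-contained within the paper and makes transparent why only the torus-intrinsic data of $V(f)$ matters. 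Two small remarks on your bookkeeping. First, the worry about reducedness and monomial factors of $\varphi_A^*(f)$ is handled correctly via Corollary~\ref{cor-reduced}, but it is not really needed: monomials vanish nowhere on $(\C^*)^n$, so $V(\varphi_A^*(f))\setminus\Hcal=\varphi_A^{-1}\big(V(f)\setminus\Hcal\big)$ as sets in any case, and Theorem~\ref{thm-top-degree} (in the form \eqref{eq-top-degree-alternate}) is stated for an arbitrary homogeneous polynomial. Second, your final step --- that $T_{x_0+\dotsb+x_n}$ is the identity, hence $\deg T_{g_A}=1$, and degree one means birational under the paper's conventions --- coincides with the paper's conclusion.
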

\begin{proof}
The proof follows from the identity 
(see \cite[Lemma~2.5]{BRS10})
\[
T_{\varphi_A^*(f)} = A^t\circ T_f \circ \varphi_A
\] 
where $A^t$ is the transpose of $A$. Alternatively, since $\varphi_A$ is an isomorphism of the algebraic torus,  the invariance of the  degree also follows from Theorem~\ref{thm-top-degree}.

Since the toric polar map associated to the linear form $x_0+\dotsb+x_n$ is simply the identity, 
the last assertion in the statement follows.
\end{proof}

Note that Example~\ref{example-birational2-Dolgachev} fits into this construction. Another
important case is the standard Cremona transformation, described below.  

\begin{example}
\label{example-birational3}
Consider the family of polynomials
\[
f_n=\sum_{j=0}^n x_0\dotsb \hat{x}_j\dotsb x_n \qquad \in \C[x_0,\dotsc,x_n]
\]
where $\hat{x}_j$ means that $x_j$ is ommited.
A direct computation shows that
the toric polar map $T_{f_n}$ is given by the linear system
\[
\gera{x_0\dotsb \hat{x}_j\dotsb x_n; \ j=0,\dotsc,n }
\]
which defines the \emph{standard Cremona transformation} $\phi_n$ on $\P^n$. The multidegrees of the maps $\phi_n$, which therefore agree with the ones of $T_{f_n}$, are well known (see e.g. \cite[Theorem~2]{GP} or \cite[Example~5.3]{FM12}):
\[
\textstyle
1,\binom{n}{1},\binom{n}{2},\dotsc,\binom{n}{n-1},1.
\]
Hence, from Theorem~\ref{thm-A} 
we obtain
\[
\csm( \one_{\P^n\setminus(V(f_n)\cup\Hcal)} ) = 
\sum_{i=0}^n (-1)^i \binom{n}{i}h^i
\qquad \in A_*\P^n.
\]
This example shows that, although  the degree of 
toric polar maps are preserved by monomial invertible transformations, the other multidegrees might not be.
\terminou
\end{example}
	
%
%

\vspace{0.5cm}

\font\smallsc=cmcsc9
\font\smallsl=cmsl9

\noindent{\scriptsize\sc Universidade Federal Fluminense, Instituto de Matem\'atica e Estat\'istica.\\
Rua Alexandre Moura 8, S\~ao Domingos, 24210-200 Niter\'oi RJ,
Brazil.}

\vskip0.1cm

{\scriptsize\sl E-mail address: \small\verb?tfassarella@id.uff.br?}

{\scriptsize\sl E-mail address: \small\verb?nivaldomedeiros@id.uff.br?}

{\scriptsize\sl E-mail address: \small\verb?rsalomao@id.uff.br?}


\begin{thebibliography}{123456789}

\bibitem[Alu94]{Alu94}
P. Aluffi,
\emph{MacPherson's and Fulton's Chern Classes of Hypersurfaces}.
Int. Math. Res. Notices.  {\bfseries 11} (1994), 455--465.


\bibitem[Alu99]{Alu99}
P. Aluffi, 
\emph{Differential forms with logarithmic poles and Chern-Schwartz-MacPherson classes of singular varieties}. 
C. R. Acad. Sci. Paris S\'{e}r. I Math. 
{\bfseries 329} (1999), no. 7, 619--624.

\bibitem[Alu03]{Alu03}
P. Aluffi,
\emph{Computing characteristic classes of projective schemes}.
J. Symb. Comp. {\bfseries35} (2003), no. 1, 3--19.

\bibitem[Alu05]{AluCSMintro}
P. Aluffi,
\emph{Characteristic classes of singular varieties}. 
In: Topics in cohomological studies of algebraic varieties, Trends in Mathematics (Birkh\"auser, Basel, 2005), 1--32.

\bibitem[Alu13]{Alu13}
P. Aluffi,
\emph{Euler characteristics of general linear sections and polynomial Chern classes}. 
Rend. Circ. Mat. Palermo. {\bfseries62} (2013), 3--26.

\bibitem[AH18]{AH18} P. Aluffi and  C. Harris, 
\emph{The Euclidean distance degree of smooth complex projective varieties}. 
Algebra and Number Theory, {\bfseries 12} (8) (2018), 2005--2032.


\bibitem[BRS10]{BRS10}
H.-C. von Bothmer, K. Ranestad and F. Sottile,
\emph{Linear Precision for Toric Surface Patches}.
Found. Comput. Math.  {\bfseries 10}  (2010),  3--66.

%

\bibitem[CRS08]{CRS08}
C. Ciliberto, F. Russo, A. Simis, 
\emph{Homaloidal hypersurfaces and hypersurfaces
with vanishing Hessian}.
Adv. in Math. {\bfseries218} (2008), 175--1805.

%

\bibitem[DL14]{DL}
O. Debarre and B. Lass,
\emph{Monomial Transformations of the Projective Space}.
Trends in Contemporary Mathematics. Springer INdAM Series,  {\bfseries 8} (2014), 97--103. 
 

\bibitem[DP03]{DP03}
A. Dimca and S. Papadima,
\emph{Hypersurfaces complements, Milnor fibres and higher homotopy groups of arrangements}.
Ann. of Math. {\bfseries 158} (2003), 473--507.

\bibitem[Dol00]{Dol00}
I. Dolgachev,
\emph{Polar Cremona transformations}.
Michigan Math. J. {\bfseries 48} (2000),  191--202.

\bibitem[Dol16]{Dolg16}
I. Dolgachev, 
{Lectures on Cremona Transformations}. 
\url{www.math.lsa.umich.edu/~idolga/cremonalect.pdf} (2016).


\bibitem[Fas11]{F11}
T. Fassarella,
\emph{Logarithmic Hesse's problem}.
Bulletin of the Brazilian Mathematical Society, New Series,  {\bfseries 42} (2011), Issue 2,  321--330.

\bibitem[FM12]{FM12}
T. Fassarella and N. Medeiros,
\emph{On the polar degree of projective hypersurfaces}.
J. London Math. Soc. (2012) 86 (1): 259--271.

\bibitem[FM23]{FM22}
T. Fassarella and N. Medeiros,
\emph{Monomial Cremona transformations and toric polar maps}.
Communications in Algebra. (2023) {\bfseries 51:1}, 1900--1906. 

\bibitem[FP07]{FP07}
T. Fassarella and J. Pereira,
\emph{On the degree of polar transformations. An approach through logarithmic foliations}.
Sel. Math. New Series {\bfseries 13} (2007), 239--252.

\bibitem[Ful84]{Ful84}
W. Fulton,
{Intersection theory}. 
Springer-Verlag. Berlin, (1984).

\bibitem[GPS10]{GaS10}
L. Garcia-Puente and F. Sottile, 
\emph{Linear precision for parametric patches}. 
Adv. Comput. Math. {\bfseries 33}  (2010), 191--214. 

\bibitem[GSP03]{GP03}
G. Gonzalez-Sprinberg and I. Pan, 
\emph{On the Monomial Birational Maps of the Projective Space}.
Anais da Academia Brasileira de Ci\^encias, {\bfseries 75} (2003), 129--134.

\bibitem[GSP06]{GP}
G. Gonzalez-Sprinberg and I. Pan,
\emph{On characteristic classes of determinantal Cremona transformations}.
Math. Annalen, {\bfseries 335} no. 2 (2006), 479--487.


\bibitem[Huh12]{Huh12}
J. Huh,
\emph{Milnor numbers of projective hypersurfaces and the chromatic polynomial of graphs}.
J. Amer. Math. Soc. (2012) 25:907--927.

\bibitem[Huh13]{Huh13}
J. Huh,
\emph{The maximum likelihood degree of a very affine variety}. 
Compositio Math. {\bfseries 149} (2013), 1245--1266.

\bibitem[Huh14]{Huh14}
J. Huh,
\emph{Varieties with maximum likelihood degree one}. 
J. Alg. Statistics {\bfseries 5} No. 1 (2014), 1--17.

\bibitem[M2]{M2}
R. Grayson and E. Stillman,
Macaulay2, a software system for research in algebraic geometry.
Available at \url{https://faculty.math.illinois.edu/Macaulay2}

\bibitem[Mac74]{Mac74}
R. MacPherson,
\emph{Chern classes for singular algebraic varieties}.
Ann. Math., {\bfseries 100} (1974), 423--432.

\bibitem[MRWW22]{MRWW22}
L. G. Maxim, J. I. Rodriguez, B. Wang and L. Wu,
\emph{Logarithmic cotangent bundles, Chern-Mather classes, and the Huh-Sturmfels involution conjecture}. (2022) \url{arXiv:2202.00554}. 



\bibitem[Kap91]{Kap91}
M. Kapranov,
\emph{A characterization of $A$-discriminantal hypersurfaces in terms of the logarithmic Gauss map}.
Math. Ann. {\bfseries 290} (1991), 277--285.





\bibitem[Mik00]{Mikhalkin}
G. Mikhalkin, 
\emph{Real algebraic curves, the moment map and amoebas}. 
Annals of Mathematics, {\bfseries151} (2000), 309--326.

\bibitem[Nor78]{Nor78}
Y. Norimatsu,
\emph{Kodaira Vanishing Theorem and Chern Classes for $\partial$-Manifolds}.
Proc. Japan Acad. {\bfseries 54}, Ser. A. (1978), 107--108.

\bibitem[Ohm03]{Ohm03} T. Ohmoto, 
\emph{An elementary remark on the integral with respect to Euler characteristics of projective hyperplane sections}. 
Reports of the Faculty of Science, Kagoshima University. {\bfseries 36} (2003), 37--41.

\bibitem[OT95]{OT95}
P. Orlik and H. Terao, 
\emph{The number of critical points of a product of powers of linear functions}. 
Invent. Math. {\bfseries 120} (1995), 1--14.

\bibitem[Sch65a]{Sch65a} 
M.-H. Schwartz, 
\emph{Classes caract\'{e}ristiques d\'{e}finies par une stratification d'une vari\'{e}t\'{e} analytique complexe. I}. C. R. Acad. Sci. Paris, {\bfseries 260} (1965), 3262--3264.

\bibitem[Sch65b]{Sch65b} 
M.-H. Schwartz, 
\emph{Classes caract\'{e}ristiques d\'{e}finies par une stratification d'une vari\'{e}t\'{e} analytique complexe. II}. C. R. Acad. Sci. Paris, {\bfseries 260}  (1965), 3535--3537.

\bibitem[Sil96]{Sil96}
R. Silvotti, 
\emph{On a conjecture of Varchenko}. 
Invent. Math. {\bfseries 126} (1996), 235--248.

\bibitem[Var95]{Var95}
A. Varchenko, 
\emph{Critical points of the product of powers of linear functions and families of bases
of singular vectors}.
Compositio Math. {\bfseries 97} (1995), 385--401.


\end{thebibliography}
\end{document}